\newcommand{\no}[1]{#1}
\renewcommand{\no}[1]{}
\renewcommand{\Delta}{\upDelta}}
\numberwithin{algorithm}{section}
\numberwithin{figure}{section}
\newtheorem{lemma}{Lemma}[section]
\newtheorem{remark}{Remark}[section]
\newtheorem{proposition}{Proposition}[section]
\newtheorem{theorem}{Theorem}[section]
\date{\today}
\newcommand{\C}{\mathbb{C}}
\def\m{\mathcal}
\let\conjugatet\overline 
\def\cc{\conjugatet}
\def\w{\widetilde }
\def\ep{\varepsilon}
\def\t{\tau}
\def\b{\bigg}
\def\no{\nonumber}
\newcommand{\norm}[1]{\|#1\|}
\newcommand{\abs}[1]{\left|#1\right|}
\newcommand{\R}{\mathbb{R}}
\def\qed{\ifmmode\hbox{\hfill\sqb}\else{\ifhmode\unskip\fi%
\nobreak\hfil
\penalty50\hskip1em\null\nobreak\hfil\sqb
\parfillskip=0pt\finalhyphendemerits=0\endgraf}\fi}
\newcommand{\intt}{\int^t_0}
\newcommand{\ds}{\displaystyle}
\newcommand{\be}{\begin{equation}}
\newcommand{\ee}{\end{equation}}
\newcommand{\ba}{\begin{array}}
\newcommand{\ea}{\end{array}}
\newcommand{\bea}{\begin{eqnarray*}}
\newcommand{\eea}{\end{eqnarray*}}
\newcommand{\bean}{\begin{eqnarray}}
\newcommand{\eean}{\end{eqnarray}}
\def\sqw{\hbox{\rlap{\leavevmode\raise.3ex\hbox{$\sqcap$}}$%
\sqcup$}}
\def\sqb{\hbox{\hskip5pt\vrule width4pt height6pt depth1.5pt%
\hskip1pt}}
\begin{document}

\title{Recovering the initial condition in the One-Phase Stefan problem}

\author{Chifaa Ghanmi}
\address{Chifaa Ghanmi, Facult\'e des Sciences de Tunis, Universit\'e Tunis El Manar, Tunisia}
\email{Chifaa.Ghanmi@fst.utm.tn}

\author{Saloua Mani Aouadi}
\address{Saloua Mani Aouadi,  Facult\'e des Sciences de Tunis, Universit\'e Tunis El Manar, Tunisia}
\email{Saloua.Mani@fst.utm.tn}

\author{Faouzi Triki}

\address{Faouzi Triki,  Laboratoire Jean Kuntzmann,  UMR CNRS 5224, 
Universit\'e  Grenoble-Alpes, 700 Avenue Centrale,
38401 Saint-Martin-d'H\`eres, France}

\email{faouzi.triki@univ.grenoble-alpes.fr}

\thanks{
The work of Faouzi Triki is supported in part by the
 grant ANR-17-CE40-0029 of the French National Research Agency ANR (project MultiOnde).}

\begin{abstract}
We consider the problem of recovering the initial condition 
in the one-dimensional one-phase Stefan problem for the heat equation from the knowledge of the position of the
melting point. We first recall some properties of the free boundary solution.  Then we study the uniqueness and
 stability of the inversion. The principal contribution  of the paper is a new  logarithmic type stability  estimate 
 that shows that the inversion may be severely ill-posed. The proof is based on  integral equations representation
 techniques,  and  the unique continuation property for parabolic type solutions. We also present few  numerical examples   operating with  noisy synthetic  data.
   \end{abstract}

\maketitle

\section{Introduction}
Stefan problem is a  free boundary problem related to the heat  diffusion. It describes  the temperature spread  in a medium undergoing a phase change, for example water passing to ice. It can be found in many engineering settings where melting or freezing cause a boundary change \cite{gol2012inverse,rubinstein1971stefan}.\\

The Stefan problem for the heat equation consists in determining the temperature and the location of the melting front delimiting the different phases when the initial and boundary conditions are given.  It admits a unique solution which depends continuously on the data, assuming that the initial state and the source functions have the correct signs \cite{friedman1992free,ladyzhenskaia1968linear}. Conversely, the inverse Stefan problem is to recover initial condition from measurement of the moving boundary position. It is a non conventional Cauchy type  problem because of  the 
dependency of  free boundary on the initial and boundary conditions. \\

 In contrast with the direct Stefan problem only few theoretical results are available for the related inverse problem. Recently, the same authors studied the identification of a boundary flux condition  in the one dimensional 
 one-phase Stefan  problem \cite{ghanmi2020identification}. Most available published  materials have considered  the  numerical reconstruction of the initial condition or the  heat flux on the boundary \cite{jochum1980numerical,johansson2011method, wei2009reconstruction, hajiollow2020recovering}. Indeed, solving numerically  both direct and inverse Stefan problems could  be  difficult because of the free boundary, the nonlinearity and  the instability \cite{cannon1967stability, wrobel1983boundary, reemtsen1984method}.    \\

Our aim in this paper is to recover the initial state in the one-dimensional one-phase Stefan problem from the knowledge of the moving free boundary.  The problem of determining the initial condition  of linear parabolic equations has been the objective of a lot of papers in the last years  \cite{choulli2017logarithmic, li2009conditional, garcia2013heat, choulli2015various}.   On the other hand, to our knowledge, for parabolic free-boundary problems, this has not been considered in depth.  The uniqueness of the inversion has been established by several authors under different smoothness assumptions on the boundary influx and the initial condition \cite{kyner1959existence, cannon1967cauchy, cannon1967existence, friedman2008partial}. The  problem can also be related to the controllability of parabolic or hyperbolic  free-boundary problems \cite{fernandez2016controllability, fernandez2019local, ammari2017remark, geshkovski2019controllability, ammari2020weak}. \\

The outline of this paper is as follows. In section 2, we formulate the direct Stefan problem. We recall existence, uniqueness and stability results for the direct problem. In section 3, we set the inverse Stefan problem. The main stability estimate is provided in Theorem \ref{main theorem u0}. Finally,  we present  in section 4 numerical  examples for the  inverse Stefan problem utilizing noisy synthetic data.

\section{The direct problem}
In this section, we introduce the one dimension one-phase Stefan problem. Let $b>0$ and $T>0$ be two fixed constants. For any positive function $s\in C([0,T])$ satisfying $s(0)=b$, we define the open set $Q_{s,T}\subset \R_+ \times (0,T)$, by 
\begin{equation}\label{Qst}
 Q_{s,T}=\{ (x,t) |\ \  0<x<s(t), \quad 0<t<T \}.
\end{equation}
The direct Stefan problem consists in determining $u(x,t)\in C(\cc {Q_{s,T}})\cap C^{2,1} (Q_{s,T})$ and $s(t) \in C^1((0,T])\cap C([0,T])$,  satisfying 
 \begin{align}
    u_t-u_{xx}&=0,  &\quad in \quad  Q_{s,T}, \label{1.16} \\
   -u_x(0,t)&=h(t)>0, \quad &0<t<T, \label{1.17} \\
   u(x,0)&=u_0(x) \geq 0, \quad &0<x<b, \label{1.18}\\
   -u_x(s(t),t)&=\overset{.}{s}(t), \quad &0<t<T, \label{1.19}\\   
   u(s(t),t)&=0, \quad &0<t<T, \label{1.20}\\
   s(0)&=b. \label{1.21}
  \end{align}
where $h\in C([0,T])$ and $u_0\in C([0,b])$ are given. \\

Let $H>0$ be a fixed constant. Throughout the paper we assume 
\begin{equation}\label{1.22}
 h\in C([0,T]), \ \ h(t)>0, \ \ 0\leq t\leq T,
\end{equation}
\begin{equation}\label{1.23}
 u_0\in C([0,b]), \ \ 0\leq u_0(x)\leq H(b-x), \ \ 0\leq x\leq b.
\end{equation}
Let 
\begin{equation}\label{M=}
\w M=\max \{ \norm{h}_\infty , H\}. 
\end{equation}
Next, we give a result of existence and uniqueness of the Stefan problem (\ref{1.16})-(\ref{1.21}).
\begin{theorem}\label{theorem2.1}
The problem (\ref{1.16})-(\ref{1.21}) admits a unique solution $(s(t),u(x,t))$. In addition the solution $(s(t),u(x,t))$ satisfies
\begin{align}
0&<u(x,t) \leq \w M(s(t)-x),  & \text{in} \ \  Q_{s,T} \label{1.28} \\
0&\leq -u_x(s(t),t)=\dot{ s} (t) \leq \w M ,  & \text{in } (0,T]. 
\end{align} 
\end{theorem}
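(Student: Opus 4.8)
The plan is to obtain existence and uniqueness by the classical reduction of the free boundary problem to a nonlinear Volterra integral equation, and then to read off the two quantitative bounds from the maximum principle once a solution is in hand. First I would fix a curve $s\in C^1([0,\delta])$ with $s(0)=b$ lying in a small ball about the constant $b$, and solve on the non-cylindrical domain $Q_{s,\delta}$ the linear mixed problem consisting of (\ref{1.16}), the Neumann condition (\ref{1.17}), the initial condition (\ref{1.18}) and the Dirichlet condition (\ref{1.20}); this is a standard well-posed parabolic problem with a unique solution $u=u[s]\in C(\overline{Q_{s,\delta}})\cap C^{2,1}(Q_{s,\delta})$ (cf. \cite{friedman1992free,ladyzhenskaia1968linear}). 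Representing $u[s]$ through heat potentials (single-layer potentials on $\{x=0\}$ and on $\{x=s(t)\}$ together with the Poisson term carrying $u_0$), the lateral trace $\tau\mapsto u_x[s](s(\tau),\tau)$ is H\"older continuous and its dependence on $s$ has a Lipschitz modulus controlled by parabolic Schauder estimates. The remaining free boundary conditions (\ref{1.19}) and (\ref{1.21}) then amount to the fixed-point equation
\[
s(t)=\mathcal F(s)(t):=b-\int_0^t u_x[s]\bigl(s(\tau),\tau\bigr)\,d\tau ,
\]
and I would check that for $\delta$ small enough $\mathcal F$ maps a suitable closed ball of $\{s\in C^1([0,\delta]):s(0)=b\}$ into itself and is a contraction there, so that Banach's fixed point theorem yields a unique local solution.

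Next I would establish the two bounds, assuming a solution $(s,u)$ of (\ref{1.16})--(\ref{1.21}) on some interval $[0,T_0]$, using only the maximum principle. A negative minimum of $u$ would, by the weak maximum principle, be attained on the parabolic boundary of $Q_{s,T_0}$; it cannot occur on $\{t=0\}$ where $u_0\ge 0$, nor on $\{x=s(t)\}$ where $u=0$, and a negative minimum at a point $(0,t_0)$ with $t_0>0$ is excluded by Hopf's lemma, since there $u_x(0,t_0)=-h(t_0)<0$ has the wrong sign. Hence $u\ge 0$, and the strong maximum principle upgrades this to $u>0$ in $Q_{s,T_0}$ ($u\equiv 0$ being incompatible with $h>0$). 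Since $u(s(t),t)=0$ and $u\ge 0$ inside, the one-sided difference quotient gives $u_x(s(t),t)\le 0$, i.e. $\dot s(t)\ge 0$. Putting $w(x,t):=\w M\,(s(t)-x)-u(x,t)$, one has $w_t-w_{xx}=\w M\,\dot s(t)\ge 0$, so $w$ is a supersolution; on $\{t=0\}$, $w=\w M(b-x)-u_0(x)\ge 0$ by (\ref{1.23}) and (\ref{M=}); on $\{x=s(t)\}$, $w=0$; and a negative minimum at $(0,t_0)$ is again impossible because $w_x(0,t_0)=h(t_0)-\w M\le 0$ whereas Hopf's lemma would force it positive. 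Therefore $w\ge 0$, which is exactly (\ref{1.28}); and dividing $0\le u(x,t)\le \w M(s(t)-x)$ by $x-s(t)<0$ and letting $x\uparrow s(t)$ gives $-\w M\le u_x(s(t),t)\le 0$, that is $0\le \dot s(t)=-u_x(s(t),t)\le \w M$.

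Finally, these bounds are uniform in $T_0$: they confine $s$ to $[b,b+\w M\,T]$ with $0\le\dot s\le\w M$ and they control $u$ through (\ref{1.28}), so the constants in the local contraction do not degrade and the local solution extends step by step to all of $[0,T]$, uniqueness propagating likewise. The hard part, though entirely classical, is the first step: making the potential representation precise enough that $u_x[s](s(\cdot),\cdot)$ is a well-defined H\"older function which is Lipschitz in $s$, handling the compatibility of $h$ and $u_0$ at the corner $(b,0)$ (needed for $s\in C^1$ up to $t=0$ and for the $C^{2,1}$ regularity of $u$), and proving the contraction estimate for $\mathcal F$. The bounds in the statement, by contrast, are a routine application of the maximum and Hopf lemmas as sketched above.
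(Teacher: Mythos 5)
Your proposal follows exactly the strategy the paper itself indicates for Theorem \ref{theorem2.1} --- a fixed-point (contraction) argument on the free boundary for existence and uniqueness, combined with the parabolic maximum principle and Hopf's lemma for the bounds \eqref{1.28} and $0\le \dot s\le \w M$ --- which the paper does not carry out but delegates to the cited works of Cannon and coauthors. Your comparison-function argument with $w=\w M(s(t)-x)-u$ and the boundary sign checks is correct, so the proposal is sound and essentially coincides with the paper's (referenced) proof.
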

The proof of this theorem is based on the maximum principle for parabolic type equations, and the fixed point Theorem. The existence and uniqueness of the Stefan problem has been studied by various authors under different smoothness assumptions (see for instance \cite{cannon1971remarks,cannon1967stability} and references therein).\\

Next we give some useful properties of the solutions of the Stefan problem as well as their stability with respect to boundary and initial data. \\

Consider two sets $(h_i(t),u_{0,i}(x)),\ i=1,2$, of Stefan data satisfying assumptions (\ref{1.22}) and (\ref{1.23}). By the Theorem \ref{theorem2.1}, each one of these two problems admits a unique solution $(s_i,u_i)$. 
\begin{theorem}\label{stability of direct problem}
If $b_1<b_2$, then the free boundaries $s_i(t), i= 1, 2$, corresponding to the data $(h_i(t),u_{0,i}(x)),\ i=1, 2$, 
satisfy 
\begin{equation}\label{directstability}
\abs{s_1(t)-s_2(t)}\leq C\bigg( b_2-b_1+\int^{b_1}_0 \abs{u_{0,1}(x)-u_{0,2}(x)}dx +\int^{b_1}_{b_2} u_{0,2}(x) dx +\intt \abs{h_1(\t)-h_2(\t)}d\t  \bigg),
\end{equation}
for $0\leq t\leq T,$ where $C>0$ only depends on $T$ and $\w M$.
\end{theorem}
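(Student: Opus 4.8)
The plan is to compare the two free-boundary solutions by writing each of them as the solution of an integral (heat-potential) representation and then estimating the difference of these representations. Concretely, for each $i$ I would use the Green's function of the heat equation on the half-line and the boundary conditions \eqref{1.17}--\eqref{1.20} to express $u_i$ — and, crucially, the free boundary $s_i$ — through a nonlinear Volterra integral equation of the form
\begin{equation*}
s_i(t) = b_i + \intt h_i(\tau)\,d\tau + \text{(initial-data term)} + \text{(single-layer term on } s_i),
\end{equation*}
where the initial-data term involves $\int_0^{b_i} u_{0,i}(y)\,N(x,t;y,0)\,dy$ for the appropriate Neumann kernel $N$, and the single-layer term carries the unknown density $\dot s_i$. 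This is the standard reduction used in the existence proof behind Theorem \ref{theorem2.1}, so I may invoke it. Subtracting the two equations, the constant terms produce $b_2-b_1$ and $\intt|h_1-h_2|\,d\tau$; the initial-data terms, after splitting $\int_0^{b_2} = \int_0^{b_1} + \int_{b_1}^{b_2}$, produce $\int_0^{b_1}|u_{0,1}-u_{0,2}|\,dx$ and $\int_{b_1}^{b_2} u_{0,2}(x)\,dx$ (using the sign $u_{0,2}\ge 0$ and the bound \eqref{1.23}); the nonlinear single-layer terms produce a term controlled by $\int_0^t |s_1(\tau)-s_2(\tau)|\,d\tau$ plus $\int_0^t |\dot s_1(\tau)-\dot s_2(\tau)|\,d\tau$.

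The second step is to absorb the troublesome $\int_0^t|\dot s_1-\dot s_2|$ contribution. Here I would use the a priori bounds from Theorem \ref{theorem2.1}, namely $0\le \dot s_i \le \widetilde M$ and $0<u_i\le \widetilde M(s_i-x)$, together with smoothing properties of the heat kernel (the derivative of the single-layer potential in $t$ is only weakly singular on the moving boundary, of order $(t-\tau)^{-1/2}$), to show that the difference of the flux terms is itself controlled by the same right-hand side quantities plus a weakly-singular convolution of $|s_1-s_2|$. In other words, after differentiating the integral equation and doing the standard parabolic potential estimates, one gets an inequality
\begin{equation*}
|s_1(t)-s_2(t)| \le C\Big(b_2-b_1 + \int_0^{b_1}|u_{0,1}-u_{0,2}|\,dx + \int_{b_1}^{b_2}u_{0,2}\,dx + \intt |h_1-h_2|\,d\tau\Big) + C\intt \frac{|s_1(\tau)-s_2(\tau)|}{\sqrt{t-\tau}}\,d\tau .
\end{equation*}

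The final step is a Gronwall-type argument: a generalized Gronwall inequality for weakly-singular kernels (iterating the Volterra operator with kernel $(t-\tau)^{-1/2}$ twice makes it bounded) closes the estimate and yields \eqref{directstability} with $C$ depending only on $T$ and $\widetilde M$. I expect the main obstacle to be the second step — carefully bounding the difference of the nonlinear single-layer potentials evaluated on two \emph{different} curves $s_1$ and $s_2$, since one must control both the difference of densities and the difference of the domains of integration/evaluation points, and make sure every constant that appears depends only on $T$ and $\widetilde M$ (this is where the uniform bounds $\dot s_i\le\widetilde M$, $b_i\le b_2$, and the linear-in-$(s_i-x)$ bound on $u_i$ are essential). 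The monotonicity hypothesis $b_1<b_2$ is used precisely so that $Q_{s_1,T}$ and $[0,b_1]$ sit inside the corresponding sets for index $2$, which keeps the sign of the extra boundary-layer and initial-data remainders under control.
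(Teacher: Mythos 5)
Your route is genuinely different from the one the paper relies on, and unfortunately its central step is not established. The paper itself does not prove Theorem \ref{stability of direct problem}; it invokes \cite{cannon1967stability}, where the estimate comes from the heat-balance identity obtained by integrating $u_t-u_{xx}=0$ over the domain, namely $s(t)=b+\int_0^b u_0(x)\,dx+\int_0^t h(\tau)\,d\tau-\int_0^{s(t)}u(x,t)\,dx$, combined with monotone dependence of the free boundary on the data (maximum principle) applied to auxiliary majorant/minorant problems. That identity contains no single-layer potential and, crucially, never requires any control of $\dot s_1-\dot s_2$, which is exactly the quantity your scheme must tame.

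The gap is the step you yourself flag as the main obstacle but then dispose of by appeal to ``standard parabolic potential estimates'': bounding the difference of the single-layer terms, and in particular the contribution $\int_0^t N(s_1(t),s_1(\tau);t,\tau)\,[\dot s_1(\tau)-\dot s_2(\tau)]\,d\tau$, by the data terms plus a weakly singular convolution of $|s_1-s_2|$. The natural manipulations do not give this. Integrating by parts in $\tau$ produces $\partial_\tau N(s_1(t),s_1(\tau);t,\tau)$, whose leading singularity is of order $(t-\tau)^{-3/2}$ (the exponential factor is of order one on the diagonal, since $|s_1(t)-s_1(\tau)|^2/(t-\tau)\le \widetilde{M}^2(t-\tau)\to 0$), multiplied by $s_1(\tau)-s_2(\tau)$, which does not vanish as $\tau\to t$; the resulting integral is not even finite. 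Estimating instead $\int_0^t N\,|\dot s_1-\dot s_2|\,d\tau\le C\sqrt{t}\,\|\dot s_1-\dot s_2\|_\infty$ just shifts the problem to a stability estimate for $\dot s$ in terms of the $L^1$-type data differences on the right of \eqref{directstability}, which is not available (note that the analogous lemma in Section 3 of the paper needs the full $W^{1,\infty}$ norm of $s-\widetilde s$ for exactly this reason). So the intermediate inequality on which your Gronwall step rests is unsubstantiated. A secondary but real error: $b_1<b_2$ alone does not imply $s_1\le s_2$ or that $Q_{s_1,T}$ is contained in $Q_{s_2,T}$; monotone dependence of the free boundary requires ordering of all three data $b$, $u_0$, $h$, so your closing remark about nested domains is unjustified — in the cited proof the nesting is manufactured through comparison problems with ordered data, not assumed from $b_1<b_2$.
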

The proof of this theorem is detailed in \cite{cannon1967stability}. The constant $C>0$ appearing in Theorem \ref{stability of direct problem} is in fact exponentially increasing as function of $T$ and $\w M$.

\section{The Inverse Problem}
\setcounter{equation}{0}

In this section,  we consider the problem of determining the initial condition  $u(x,0)=u_0(x)$ in the system below from the knowledge of the moving boundary $s\in C^1([0,T])$, and the heat flux $h\in C([0,T])$.
\begin{eqnarray}\label{Inverse problem u0}
(\mathcal{P})\left\{ \begin{array}{rll}
   u_t-u_{xx}&=0,  &0<x<s(t), \ 0<t<T ,\cr
   -u_x(0,t)&=h(t) \geq 0,  &0<x<b,\cr
   -u_x(s(t),t)&=\overset{.}{s}(t), &0<t<T,\cr   
   u(s(t),t)&=0, &0<t<T,\cr
   s(0)&=b. & \cr 
\end{array}\right.
\end{eqnarray}
We next present the main result of the paper.
\begin{theorem}\label{main theorem u0}
Let $\hat{H},\widehat{M}\geq 1$ be two fixed constants. Let $h\in C([0,T])$ be a given strictly positive function satisfying $\norm{h}_\infty\leq \hat{H}$, $u_0,\widetilde u_0 \in H^1(0,b)$,  verifying
\begin{align}
0&\leq u_0(x)\leq H(b-x) ,\ 0\leq x\leq b,  \\
0&\leq \widetilde u_0(x)\leq H(b-x),\ 0\leq x\leq b,  \\
& \norm{u_0}_{H^1(0,b)},\norm{\widetilde u_0}_{H^1(0,b)} \leq \widehat{M} .
\end{align}
Let $u$ and $\w u$ be the solutions to the system (\ref{Inverse problem u0}) associated respectively to  $u_0$ and $\w u_0$. Denote $s$ and $\w s$ the free boundaries of respectively the solutions $u$ and $\w u$. Then  there 
exists a constant $\varepsilon_0 = \varepsilon(\widehat{M}, b, T,\hat{H}, u_0)>0$  such that  if $\norm{s-\w s}_{W^{1,\infty}}< \varepsilon_0$,
 the following  estimate 
\begin{equation} \label{mainineq}
\norm{u_0 -\w u_0}_{L^2(0,b)} \leq \frac{C}{\left| \ln( \left| \ln \left(\norm{s-\w s}_{W^{1,\infty}} \right) \right| )\right| ^{\frac{1}{4}} },
\end{equation}
holds, where the constant $C= C(\widehat{M},b, T, \hat{H}, u_0)>0$.
\end{theorem}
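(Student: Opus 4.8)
The plan is to recast the recovery of $u_0$ as a quantitative unique continuation problem for the heat equation, via a single‑layer heat‑potential representation that encodes both free‑boundary relations, and then to invoke the logarithmic conditional stability of that unique continuation. \emph{Step 1: potential representation.} Extend $u$ by zero for $x>s(t)$, writing $\overline u$ for the extension (and $\overline{\widetilde u}$ for that of $\widetilde u$). By the enthalpy formulation of the one‑phase Stefan problem, $\overline u_t-\overline u_{xx}=-\dot s(t)\,\delta(x-s(t))$ in $\mathcal D'\big((0,\infty)\times(0,T)\big)$, together with $-\overline u_x(0,t)=h(t)$ and $\overline u(\cdot,0)=u_0\,\mathbf 1_{(0,b)}$. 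Let $\Gamma(z,r)=(4\pi r)^{-1/2}e^{-z^2/4r}$, let $N(x,t;\xi,\tau)=\Gamma(x-\xi,t-\tau)+\Gamma(x+\xi,t-\tau)$ be the Neumann heat kernel of the half‑line, and let $v$ solve the heat equation on $(0,\infty)\times(0,T)$ with flux $h$ at $x=0$ and initial datum $u_0\,\mathbf 1_{(0,b)}$ (the ``naive'' heat flow of $u_0$, ignoring the front). Duhamel's formula then gives
\[
v(x,t)=\overline u(x,t)+\int_0^t N(x,t;s(\tau),\tau)\,\dot s(\tau)\,d\tau,
\]
so in the ice region $x>s(t)$, where $\overline u\equiv 0$, $v$ equals the single‑layer potential $\int_0^t N(x,t;s(\tau),\tau)\dot s(\tau)\,d\tau$, which depends on $s$ alone; the same holds for $\widetilde v$ and $\widetilde s$.

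\emph{Step 2: far‑field smallness of the difference.} Set $f=u_0-\widetilde u_0$ and $F=v-\widetilde v$; since $u$ and $\widetilde u$ carry the same flux $h$, $F$ is the heat flow on $(0,\infty)$ of $f$ with homogeneous Neumann condition, and $F(\cdot,0)=f\,\mathbf 1_{(0,b)}$. Because $\dot s,\dot{\widetilde s}\le\widetilde M$, both fronts stay in $[b,\,b+\widetilde M T]$; fix a bounded neighbourhood $\omega$ of a point $x^\ast>b+\widetilde M T$. For $x\in\omega$ and $t\in(0,T)$, Step 1 yields
\[
F(x,t)=\int_0^t\big[N(x,t;s(\tau),\tau)-N(x,t;\widetilde s(\tau),\tau)\big]\dot s(\tau)\,d\tau+\int_0^t N(x,t;\widetilde s(\tau),\tau)\,\big(\dot s-\dot{\widetilde s}\big)(\tau)\,d\tau.
\]
In the first integral all kernels are evaluated strictly off the diagonal (since $x-s(\tau),x-\widetilde s(\tau)\ge x^\ast-(b+\widetilde M T)>0$), so the integrand is $\le C\widetilde M\,|s(\tau)-\widetilde s(\tau)|$. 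In the second integral, write $\dot s-\dot{\widetilde s}=\frac{d}{d\tau}(s-\widetilde s)$ and integrate by parts in $\tau$: the endpoint term at $\tau=0$ vanishes because $s(0)=\widetilde s(0)=b$, the one at $\tau=t$ vanishes because $N(x,t;\widetilde s(t),t)=0$ for $x>\widetilde s(t)$, and what remains, $\int_0^t\partial_\tau N(x,t;\widetilde s(\tau),\tau)\,(s-\widetilde s)(\tau)\,d\tau$, is again $\le C\,|s(\tau)-\widetilde s(\tau)|$ since the kernel never meets the diagonal. Hence $\|F\|_{L^\infty(\omega\times(0,T))}\le C\,\|s-\widetilde s\|$, with $C$ depending on $b,T,\hat H,\widehat M$ (and on $u_0$ through the constants governing the fronts).

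\emph{Step 3: quantitative unique continuation.} By even reflection, $F$ solves the heat equation on $\mathbb R\times(0,T)$ with initial datum $f_e$, the even extension of $f$; here $\operatorname{supp} f_e\subset[-b,b]$ and $f_e(\pm b)=0$, because $0\le u_0(x)\le H(b-x)$ forces $u_0(b)=0$ (and likewise $\widetilde u_0(b)=0$), so $f_e\in H^1(\mathbb R)$ with $\|f_e\|_{H^1}\le C\widehat M$, while $\|F\|_{L^2(\omega\times(0,T))}\le C\|s-\widetilde s\|$ on a cylinder disjoint from $\operatorname{supp} f_e$. The estimate (\ref{mainineq}) now follows from a quantitative unique continuation / conditional stability bound for the heat equation observed on $\omega\times(0,T)$: splitting $\widehat{f_e}$ at frequency $R$, the high‑frequency part is $\le R^{-1}\|f_e\|_{H^1}$, while the low‑frequency part is controlled, with an exponential loss $e^{cR^2}$, by $\|F\|_{L^2(\omega\times(0,T))}$ — this is the unique‑continuation step itself, obtained e.g.\ via a Carleman estimate or the Fourier--Bros--Iagolnitzer transform, using $\widehat F(\xi,t)=e^{-\xi^2 t}\widehat{f_e}(\xi)$ and the analyticity in $x$ that propagates the observation off $\omega$ — and optimising $R\sim\big(\ln(1/\|s-\widetilde s\|)\big)^{1/2}$ produces the logarithmic modulus, the exponent $\tfrac14$ in (\ref{mainineq}) emerging from the interpolation between the two parts.

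\emph{Main obstacle.} The decisive step is the unique continuation in Step 3 — turning ``$F$ small on a space--time cylinder away from $\operatorname{supp} f$'' together with the $H^1$ a priori bound into the logarithmic estimate — and, in its service, the bookkeeping in Step 2 of the term carrying $\dot s-\dot{\widetilde s}$: only $\|s-\widetilde s\|$, not a bound on its derivative, is available, so it is essential that every single‑layer kernel be evaluated strictly off the diagonal, which is exactly what allows the integration by parts to close with no boundary contribution. A harmless technicality is the merely Lipschitz regularity of $\min(s,\widetilde s)$ and $\max(s,\widetilde s)$ at times where the two fronts cross.
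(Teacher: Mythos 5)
Your Steps 1--2 are sound and are essentially the paper's integral-representation lemma in a different dress: the Duhamel/enthalpy identity you write is exactly the representation \eqref{u}, and your estimate of the single-layer difference on a cylinder $\omega\times(0,T)$ beyond both fronts plays the role of the paper's bound on $I_1$ via the estimates of $I_2,\dots,I_5$. Your variant is in fact slightly cleaner on one point: by observing at $x\in\omega$ strictly off the fronts you can integrate the $(\dot s-\dot{\widetilde s})$-term by parts (using $s(0)=\widetilde s(0)=b$ and the off-diagonal decay of the kernel) and need only $\|s-\widetilde s\|_\infty$, whereas the paper, evaluating at $x=s(t)$ and $x=\widetilde s(t)$, must keep the $W^{1,\infty}$-norm in its $I_4$ estimate.

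The genuine gap is Step 3. The inequality you invoke there --- that the low-frequency part $\|\widehat{f_e}\|_{L^2(|\xi|\le R)}$ is controlled by $e^{cR^2}\,\|F\|_{L^2(\omega\times(0,T))}$ --- is not a citable black box in this form; it is precisely the quantitative unique-continuation estimate that constitutes the actual content of the theorem, and the paper spends the entire second half of its proof establishing it. Concretely, the paper writes $I_1$ (your $F$ restricted beyond the front) as a Laplace transform of $r\mapsto e^{-(r-b)^2/4t}v_0(r-b)$ with $v_0$ the reflected difference of initial data, extends it holomorphically to the right half-plane, applies the two-constants theorem with the explicit harmonic measures $w_\pm$ to convert smallness on the positive real axis into the bound $\int_{-B}^{B}|F(1+ip)|^2dp\le 2M^2B\,\varepsilon^{2(1-\frac{2}{\pi}\arctan B)}$, controls the tail $|p|>B$ by the $H^1$ bound (your high-frequency step), and only then optimizes in $B$; the quantitative gain from the data there is of the form $\varepsilon^{c/B}$, not $e^{cB^2}\varepsilon$. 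This difference matters: if your asserted $e^{cR^2}$-loss estimate were available, the optimization $cR^2\sim\frac12|\ln\varepsilon|$ would give a rate $|\ln\varepsilon|^{-1/2}$, not the $|\ln\varepsilon|^{-1/4}$ you claim to recover, which indicates that the decisive estimate was never pinned down and the "interpolation producing the exponent $\frac14$" is not derived from anything in your sketch. Saying it can be obtained "via a Carleman estimate or the FBI transform" leaves the central step of the proof unproved; to close the argument you would have to carry out a quantitative analytic-continuation (or Carleman) argument of the same nature as the paper's Hardy-space/two-constants computation, with an explicit dependence of the loss on the frequency cutoff, from which the actual exponent follows.
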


The double logarithmic  stability estimate \eqref{mainineq} shows that the inverse  Stefan problem may be severely  ill-posed. 
The obtained result is in agreement with  known stability estimates for standard  Cauchy problems of linear  parabolic equations (see for instance Theorem 1.1 in \cite{choulli2017logarithmic}). Therefore  the moving  boundary causing the nonlinearity and  higher complexity of  the direct problem,  does not seem to modify the nature of the Cauchy inversion.\\

The rest  of this section is devoted to the proof of the main Theorem \ref{main theorem u0}. \\

Now, we recall an integral representation of  Stefan problem's solution which can be found in \cite{ghanmi2020identification, cannon1967cauchy,friedman1964partial}.
\begin{lemma} 
Let $(u,s)$ be a solution of the direct Stefan problem (\ref{1.16})-(\ref{1.21}) verifying the assumptions of Theorem \ref{main theorem u0}. Then the solution $u$ satisfies
\begin{equation}\label{u}
u(x,t)= \intt N(x,0;t,\t)h(\t) d\t -\intt N(x,s(\t);t,\t) \dot s(\t) d\t+ \int^b_0 N(x,\xi;t,0)u_0(\xi) d\xi,
\end{equation}
where the Neumann function $N$ is defined by
\begin{equation}
\label{N}
N(x,\xi;t,\tau)=K(x,\xi;t,\tau)+ K(-x,\xi;t,\tau), 
\end{equation}
with
\begin{equation}
K(x,\xi;t,\tau)=\frac{1}{2\sqrt{\pi(t-\tau)}} \exp\bigg(\frac{-(x-\xi)^2}{4(t-\tau)} \bigg), \ \ \tau<t.
\end{equation} 
\end{lemma}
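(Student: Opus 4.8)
The plan is to derive \eqref{u} by a parabolic Green's identity, treating the Neumann function $N(x,\xi;t,\t)$ as the adjoint (backward) fundamental solution in the integration variables $(\xi,\t)$ and pairing it with $u$ over the space-time domain $Q_{s,T}$ truncated below the final time. First I would record the three structural properties of $N$ from \eqref{N}. Writing $K$ for the heat kernel, a direct differentiation shows that, as a function of $(\xi,\t)$ with $(x,t)$ fixed, $N$ solves the backward equation $\p_\t N+\p_\xi^2 N=0$ for $\t<t$; that $\p_\xi N(x,0;t,\t)=0$, so $N$ obeys a homogeneous Neumann condition at the fixed wall $\xi=0$ (this cancellation is precisely why the image term $K(-x,\xi;t,\t)$ is added); and that as $\t\to t^-$ the kernel $K(x,\xi;t,\t)$ concentrates as a Dirac mass at $\xi=x$, while the image $K(-x,\xi;t,\t)$ concentrates at $\xi=-x<0$, hence outside $[0,s(\t)]$.

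Next I would fix $(x,t)\in Q_{s,T}$ and apply the divergence theorem to the planar field $\mathbf F=(Nu,\,-(N u_\xi-u N_\xi))$ on the truncated domain $Q_{s,t-\ep}=\{(\t,\xi):0<\t<t-\ep,\ 0<\xi<s(\t)\}$. The algebraic identity $\p_\t(Nu)-\p_\xi(N u_\xi-u N_\xi)=N(u_\t-u_{\xi\xi})+u(N_\t+N_{\xi\xi})$, combined with the heat equation \eqref{1.16} for $u$ and the backward equation for $N$, makes $\mathbf F$ divergence free in $Q_{s,t-\ep}$, so its outward flux through $\p Q_{s,t-\ep}$ vanishes. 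I would then evaluate the four boundary pieces. On the bottom $\t=0$ the flux gives $-\int_0^b N(x,\xi;t,0)u_0(\xi)\,d\xi$; on the top $\t=t-\ep$ it gives $\int_0^{s(t-\ep)}N(x,\xi;t,t-\ep)\,u(\xi,t-\ep)\,d\xi$; on the fixed side $\xi=0$, using $\p_\xi N(x,0;t,\t)=0$ and the flux condition $u_\xi(0,\t)=-h(\t)$ from \eqref{1.17}, it gives $-\int_0^{t-\ep}N(x,0;t,\t)h(\t)\,d\t$; and on the moving side $\xi=s(\t)$, whose outward conormal is proportional to $(-\dot s,1)$, the contribution $\int_0^{t-\ep}[-\dot s\,Nu-(N u_\xi-u N_\xi)]\,d\t$ collapses, since $u(s(\t),\t)=0$ by \eqref{1.20} and $u_\xi(s(\t),\t)=-\dot s(\t)$ by \eqref{1.19}, to $\int_0^{t-\ep}N(x,s(\t);t,\t)\dot s(\t)\,d\t$.

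Setting the sum of the four contributions to zero and letting $\ep\to0^+$ then yields \eqref{u}. The only $\ep$-dependent piece is the top integral, which by the concentration property converges to $u(x,t)$ (the image part contributes nothing, as its mass sits at $\xi=-x$ outside the domain), while the two lateral integrals converge by dominated convergence because the singularity of $N$ at $\t=t$ is integrable; solving for $u(x,t)$ reproduces exactly the three terms of \eqref{u}. The hard part will be the rigorous justification of this limit together with the applicability of the divergence theorem up to the boundary: one must control $u$ and $u_\xi$ near the corner $(\t,\xi)=(0,b)$ and near the moving interface, and confirm convergence of the boundary integrals over $\xi=0$ and $\xi=s(\t)$ as $\t\to t$. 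These points I would handle using the $C^{2,1}(Q_{s,T})\cap C(\overline{Q_{s,T}})$ regularity of $u$ and the $C^1$ regularity of $s$ built into the notion of solution of \eqref{1.16}--\eqref{1.21}, together with the Gaussian bounds on $N$ and its spatial derivative.
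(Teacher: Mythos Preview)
The paper does not actually prove this lemma: it merely \emph{recalls} the representation, citing \cite{ghanmi2020identification, cannon1967cauchy, friedman1964partial} for the derivation. Your argument---pairing $u$ with the Neumann kernel $N$ via the parabolic Green identity on the truncated domain $Q_{s,t-\ep}$, exploiting the reflection property $\p_\xi N(x,0;t,\t)=0$ at the fixed wall, reading off the four boundary contributions, and passing to the limit $\ep\to 0^+$ via the Dirac concentration of $K$---is exactly the classical proof one finds in those references (see in particular Friedman's book and Cannon's treatment of the one-phase problem). The computation of the flux through the free boundary is correct: with $u(s(\t),\t)=0$ and $u_\xi(s(\t),\t)=-\dot s(\t)$ the contribution collapses to $\int_0^{t-\ep}N(x,s(\t);t,\t)\dot s(\t)\,d\t$, and the signs you obtain reproduce \eqref{u} exactly. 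So your proposal is correct and coincides with the standard derivation that the paper is invoking by citation.
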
 

Taking the integral representation  \eqref{u} of $u$  and $\w u$  at  respectively $x= s(t)$,
and $x= \tilde s(t)$, we find
\begin{align*}
u(s(t),t)=& \intt N(s(t),0;t,\t)h(\t)d\t -\intt N(s(t),s(\t);t,\t) \dot s(\t) d\t+ \int^b_0 N(s(t),\xi;t,0)u_0(\xi) d\xi, \\
\w u(\w s(t),t)=& \intt N(\w s(t) ,0;t,\t) h(\t) d\t -\intt N(\w s(t),\w s(\t);t,\t) \dot {\w s}(\t) d\t + \int^b_0 N(\w s(t),\xi;t,0)\w u_0(\xi)d\xi,
\end{align*}
for all $ t \in (0,T)$. \\

Consequently
\begin{align}
u(s(t),t)-\w u(\w s(t),t)& \hspace{-1mm}= \hspace{-2mm}\int_0^b N(\w s(t),\xi;t,0)[u_0(\xi)- \w u_0(\xi)]d\xi +  \intt [N(s(t),0;t,\t)- N(\w s(t),0;t,\t)] h(\t) d\t \no \\ 
& \hspace{-2cm}+ \intt [N(\w s(t),\w s(\t);t,\t)- N(s(t),s(\t);t,\t)] \dot {\w s}(\t) d\t + \intt N(s(t),s(\t);t,\t)[\dot {\w s}(\t)-\dot s(\t)] d\t \no \\ 
& \hspace{-2cm}+ \int^b_0 [N(s(t),\xi;t,0)-N(\w s(t),\xi;t,0)] u_0(\xi) d\xi \no \\
& = \sum_{i=1}^5 I_i  =0, \quad \text{for all } t \in (0,T). \label{sumofI} 
\end{align}

Our aim now is to estimate each of the integrals $I_i, \ i=2,...,5$ in terms of the difference between $s$ and $\w s$.
\begin{lemma}\label{lll}
Let $h\in C([0,T])$ be a given positive function, $u_0, \w u_0\in H^1(0,b)$ satisfying $\norm{u_0}_{H^1(0,b)},$\\ 
$\norm{\w u_0}_{H^1(0,b)} \leq \widehat{M} $. 
Denote $s$ and $\w s$ the free boundaries of respectively the solution $u$ and $\w u$. We have the following inequality  
\bean
\left\vert \int_0^b  N(x,\xi; t,0)[u_0(\xi)- \w u_0(\xi)]d\xi \right\vert \leq C \norm{s-\w s}_{W^{1,\infty}}, \quad x\geq \tilde s(t), \quad 0\leq t\leq T,
\eean
where the constant $C>0$ only depends on $u_0,b,\hat{H},\widehat{M}$ and $T$. 
\end{lemma}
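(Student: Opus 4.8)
The object to estimate is the function
\[
w(x,t):=\int_0^b N(x,\xi;t,0)\,[u_0(\xi)-\w u_0(\xi)]\,d\xi .
\]
Since $N(\cdot\,,\xi;\cdot\,,0)$ solves the heat equation, and since $u_0(b)=\w u_0(b)=0$ (the compatibility at the corner $(b,0)$, consistent with $0\le u_0\le H(b-x)$), the function $w$ is a bounded classical solution of $w_t-w_{xx}=0$ in $\{x>0,\ t>0\}$ with $w(x,0^+)=u_0(x)-\w u_0(x)$ on $(0,b)$, $w(x,0^+)=0$ for $x\ge b$, and $\abs{w(x,t)}\le C/x$ as $x\to\infty$, uniformly for $t\in[0,T]$. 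My plan is: (i) evaluate the identity $\sum_{i=1}^5 I_i=0$ from \eqref{sumofI}, which reads $w(\w s(t),t)=I_1=-(I_2+I_3+I_4+I_5)$; (ii) bound each of $I_2,\dots,I_5$ by $C\norm{s-\w s}_{W^{1,\infty}}$; (iii) by the same computation with the roles of $u_0$ and $\w u_0$ exchanged, obtain $\abs{w(s(t),t)}\le C\norm{s-\w s}_{W^{1,\infty}}$ as well; (iv) propagate the bound from the curve $x=\min(s(t),\w s(t))$ to all of $\{x\ge s(t)\}$ by the maximum principle.

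For step (ii) I would use the explicit Gaussian form of $N$ together with the fact that $s(0)=\w s(0)=b$, giving $\abs{(s-\w s)(t)-(s-\w s)(\t)}\le(t-\t)\norm{\dot s-\dot{\w s}}_\infty$ and in particular $\abs{s(t)-\w s(t)}\le t\norm{\dot s-\dot{\w s}}_\infty$, together with $\norm{\dot s}_\infty,\norm{\dot{\w s}}_\infty\le C$ from Theorem \ref{theorem2.1}. For $I_2$ and $I_5$ (the $\t=0$ terms) one uses that the relevant $x$-arguments stay in $[b,s(T)]$ while $\xi\in[0,b]$: the kernel difference in $I_2$ is $2[g_{t-\t}(s(t))-g_{t-\t}(\w s(t))]$ with $g_r(z)=\tfrac{1}{2\sqrt{\pi r}}e^{-z^2/4r}$, which is $O(\norm{s-\w s}_\infty)$ since $\sup_{z\ge b}\abs{g_r'(z)}\le C$ uniformly in $r>0$; in $I_5$ one writes the kernel difference as $\int_{\w s(t)}^{s(t)}\partial_x N(y,\cdot;t,0)\,dy$ and, integrating by parts in $\xi$ using $u_0\in H^1(0,b)$ and $u_0(b)=0$, bounds $\abs{\int_0^b\partial_x N(y,\xi;t,0)\,u_0(\xi)\,d\xi}$ uniformly in $t$, whence $\abs{I_5}\le C\norm{s-\w s}_\infty$. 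For $I_4$ it suffices that $\abs{N(s(t),s(\t);t,\t)}\le(\pi(t-\t))^{-1/2}$ is integrable on $(0,t)$, multiplied by $\norm{\dot{\w s}-\dot s}_\infty$. The delicate term is $I_3$: writing $N=K(x,\xi)+K(-x,\xi)$, the $K(-x,\xi)$-summand has all arguments $\ge 2b$ and is handled as in $I_2$, while for the $K(x,\xi)$-summand one estimates
\[
\abs{K(\w s(t),\w s(\t);t,\t)-K(s(t),s(\t);t,\t)}\le\frac{C}{t-\t}\,\abs{(s-\w s)(t)-(s-\w s)(\t)}\le C\norm{\dot s-\dot{\w s}}_\infty ,
\]
the cancellation of the $(t-\t)^{-1}$ singularity; together with $\norm{\dot{\w s}}_\infty\le C$ this gives $\abs{I_3}\le C\norm{s-\w s}_{W^{1,\infty}}$.

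For step (iv), set $\sigma(t):=\min(s(t),\w s(t))$, a Lipschitz function with $\sigma(0)=b$ and $\sigma\ge b$, and put $D:=\{(x,t):\sigma(t)<x,\ 0<t<T\}$. The function $w$ solves the heat equation in $D$; on the parabolic boundary, $w\equiv0$ on $\{t=0,\ x\ge b\}$ (continuously up to the corner $(b,0)$, since $u_0-\w u_0$ is continuous and vanishes at $b$), while $\abs{w(\sigma(t),t)}\le C\norm{s-\w s}_{W^{1,\infty}}$ by (ii)--(iii) because $\sigma(t)$ equals $s(t)$ or $\w s(t)$; finally $\abs{w(x,t)}\to0$ as $x\to\infty$ uniformly in $t$. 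The weak maximum principle applied to $\pm w$ (after a routine truncation at $x=R\to\infty$) then gives $\abs{w}\le C\norm{s-\w s}_{W^{1,\infty}}$ on $\overline{D}$, and since $\{x\ge s(t)\}\subset\overline{D}$ this is exactly the assertion.

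I expect the main obstacle to be the term $I_3$ in step (ii): a crude bound on the difference of the two Neumann kernels evaluated along the free boundaries loses a non-integrable factor $(t-\t)^{-1}$, and to recover integrability one must exploit that the diagonal Gaussians are evaluated at $s(t)-s(\t)$ and $\w s(t)-\w s(\t)$, whose difference is $O(t-\t)$ because $s(0)=\w s(0)=b$; this cancellation is also the reason only the $W^{1,\infty}$-norm of $s-\w s$, and not its $L^\infty$-norm, appears on the right. A secondary point is the continuity of $w$ up to the corner $(b,0)$ in step (iv), which uses the embedding $H^1(0,b)\hookrightarrow C([0,b])$ and $u_0(b)=\w u_0(b)=0$.
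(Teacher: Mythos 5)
Your outline is the paper's own argument: the identity $\sum_{i=1}^5 I_i=0$ from \eqref{sumofI}, Gaussian estimates on $I_2,\dots,I_4$ exploiting $s(t),\w s(t)\ge b$ and the Lipschitz bounds $\norm{\dot s}_\infty,\norm{\dot{\w s}}_\infty\le \w M$, and then the maximum principle to push the bound on $I_1$ from the free boundary to $\{x\ge s(t)\}$; your treatment of $I_2$, $I_3$, $I_4$ and of the extension step is correct (and your step (iv), which requires the bound on \emph{both} curves so as to control $w$ on $\min(s,\w s)$ before invoking the maximum principle on $\{x>\min(s(t),\w s(t))\}$, is actually more careful than the paper's one-line appeal; likewise your $I_3$ estimate uses $\norm{\dot s-\dot{\w s}}_\infty$ where the paper gets away with $\norm{s-\w s}_\infty$, both being admissible for the stated conclusion).

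The one flawed sub-step is $I_5$: after writing the kernel difference as $\int_{\w s(t)}^{s(t)}\partial_x N(y,\cdot;t,0)\,dy$ and integrating by parts, the quantity $\int_0^b [K(y,\xi;t,0)-K(-y,\xi;t,0)]\,u_0'(\xi)\,d\xi$ is \emph{not} bounded uniformly in $t$ for $u_0'\in L^2$ only: by Cauchy--Schwarz it is $O(t^{-1/4})\,\widehat M$, and this rate is attained (take $u_0'(\xi)\sim (b-\xi)^{-1/4}$ and $y$ near $b$, $t\to 0$), so the asserted consequence $\abs{I_5}\le C\norm{s-\w s}_\infty$ does not follow as written. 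The repair is already in your toolkit: since $s(0)=\w s(0)=b$, one has $\abs{s(t)-\w s(t)}\le t\,\norm{\dot s-\dot{\w s}}_\infty$, and combining this with the $O(t^{-1/4})$ (or even $O(t^{-1/2})$) bound on the inner integral gives $\abs{I_5}\le C\,t^{3/4}\norm{\dot s-\dot{\w s}}_\infty\le C\norm{s-\w s}_{W^{1,\infty}}$, which is all the lemma claims; the paper instead avoids integration by parts, applies the mean-value inequality \eqref{estimationexp} directly with $\norm{u_0}_\infty\le Hb$, and absorbs the resulting $1/t$ singularity by the same factor $\abs{s(t)-\w s(t)}/t\le C\norm{s-\w s}_{W^{1,\infty}}$.
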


\begin{proof}
In the following proof $C$ that  only depends on $u_0,b,\hat{H},\widehat{M}$ and $T$, stands for a generic constant strictly larger than zero.

Let $\ep$ be a fixed constant satisfying $0<\ep<t<T$, and set 
\begin{equation*}
I_2^\ep = \int_{0}^{t-\ep } [N(s(t),0;t,\t)- N(\w s(t),0;t,\t)] h(\t) d\t.
\end{equation*} 
From the mean value theorem, we deduce 
\begin{equation}\label{estimationexp}
\abs{e^{-a} - e^{-b}} \leq e^{-\min(a,b)} \abs{a-b}, \quad a,b\geq 0. 
\end{equation}
Recalling that $s(t),\w s(t)>b$ for all $t\in [0,T]$, and applying the inequality (\ref{estimationexp}) to $I_2^\ep$, we obtain 
\begin{align}
\abs{I_2^\ep} &= \abs{ \int_{0}^{t-\ep } \frac{1}{2\sqrt{\pi(t-\tau)}}\b[ 2\exp\bigg (\frac{-s(t)^2}{4(t-\tau)} \bigg)  -2\exp\bigg (\frac{-\w s(t)^2}{4(t-\tau)} \bigg)\b]  h(\t)  d\tau}, \no \\
&\leq \int_{0}^{t-\ep } \frac{e^{\frac{-b^2}{4(t-\tau)}}}{\sqrt{\pi(t-\tau)}} \abs{\dfrac{s^2(t)- \tilde{s}^2(t)}{4(t-\tau)}} \abs{h(\tau)} d\tau, \no \\
&\leq \int_{0}^{t-\ep } \dfrac{e^{\frac{-b^2}{4(t-\tau)}}}{2\sqrt{\pi}(t-\tau)^{3/2}} d\tau \max{ \big( \norm{s}_\infty,\norm{\tilde{s}}_\infty\big)} \norm{h}_\infty  \norm{s-\tilde{s}}_\infty,    \no \\
&\leq   \int^T_0 \dfrac{e^{\frac{-b^2}{4r}}}{2\sqrt{\pi r^{3/2}}} dr  \max{ \big( \norm{s}_\infty,\norm{\tilde{s}}_\infty\big)} \norm{h}_\infty  \norm{s-\tilde{s}}_\infty  \label{I2ep}.
\end{align}
Then 
\begin{equation}\label{I2}
\abs{I_2}\leq C\norm{s-\w s}_\infty.
\end{equation}
Now, let 
\begin{equation*}
I_3^\ep= \int_0^{t-\ep} [N(\w s(t),\w s(\t);t,\t)- N(s(t),s(\t);t,\t)] \dot {\w s}(\t) d\t, 
\end{equation*}
which is equivalent to 
\begin{align*}
I_3^{\ep}&= \int_{0}^{t-\ep } \frac{1}{2\sqrt{\pi(t-\tau)}}\b[ \exp\bigg (\frac{-(\w s(t)-\w s(\t))^2}{4(t-\tau)} \bigg)\nonumber 
-\exp \b( \frac{-(s(t)- s(\t))^2}{4(t-\tau)} \b)  \\
&+ \exp\bigg(\frac{-(\w s(t)+\w s(\t))^2}{4(t-\tau)}\b) 
 - \exp\bigg(\frac{-( s(t)+ s(\t))^2}{4(t-\tau)} \b) \b]  \dot {\w s}(\t) d\t.  \nonumber
\end{align*}
For $0 <\tau < t-\ep $, using (\ref{estimationexp}), we obtain
\begin{align*}
&\abs{\exp\bigg (\frac{-(\w s(t)-\w s(\t))^2}{4(t-\tau)} \bigg)- \exp\bigg(\frac{-( s(t)- s(\t))^2}{4(t-\tau)}\b)}
 \leq \frac{1}{4(t-\t)} \abs{(\w s(t)-\w s(\t))^2 -( s(t)- s(\t))^2} \\
& \leq  \frac{1}{4(t-\t)} \abs{(\w s(t)-\w s(\t))-(s(t)-s(\t))} \abs{(\w s(t)-\w s(\t))+(s(t)-s(\t))}. 
\end{align*}
Since $s,\w s$ are Lipschitz functions, we have 
\begin{equation*}
\dfrac{\abs{s(t)-s(\tau)}}{t-\tau},\dfrac{\abs{\w s(t)-\w s(\tau)}}{t-\tau} \leq \max(\norm{s}_{C^{0,1}([0,T])},\norm{\w s}_{C^{0,1}([0,T])}) \leq C \max (\norm{s}_{W^{1,\infty}}, \norm{\w s}_{W^{1,\infty}}).
\end{equation*}  
Hence 
\begin{equation*}
\abs{\exp\bigg (\frac{-(\w s(t)-\w s(\t))^2}{4(t-\tau)} \bigg)- \exp\bigg(\frac{-( s(t)- s(\t))^2}{4(t-\tau)}\bigg)}
\leq C \max (\norm{s}_{W^{1,\infty}}, \norm{\w s}_{W^{1,\infty}}) \norm{s-\w s}_\infty. 
\end{equation*}
Consequently
\begin{equation*}
\abs{I_3^\ep} \leq C \int_{0}^{t-\ep } \frac{1}{2\sqrt{\pi(t-\tau)}} d\tau \max (\norm{s}_{W^{1,\infty}}, \norm{\w s}_{W^{1,\infty}}) \norm{\w s}_{W^{1,\infty}} \norm{s-\w s}_\infty  \leq C \norm{s-\w s}_\infty, 
\end{equation*}
which yields 
\begin{equation}\label{I3}
\abs{I_3} \leq C \norm{s-\w s}_\infty.
\end{equation}
For the equation $I_4$, we have 
\begin{align}
|I_4|&= \b|  \intt N(s(t),s(\t);t,\t)[\dot {\w s}(\t)-\dot s(\t)] d\t\b| \no \\
&=\b|\intt \frac{1}{2\sqrt{\pi(t-\tau)}}\b[ \exp\bigg (\frac{-(s(t)-s(\t))^2}{4(t-\tau)} \bigg) + \exp\bigg(\frac{-(s(t)+s(\t))^2}{4(t-\tau)} \bigg)\b]\b(\dot{\w s}(\t)-\dot s(\t) \b) d\t \b| \no \\
&\leq  \intt \frac{1}{\sqrt{\pi(t-\tau)}}  \abs{\dot{\w s}(\t)- \dot s(\t)} d\tau \leq \intt \frac{1}{\sqrt{\pi(t-\tau)}} d\tau \norm {\dot{\w s}- \dot s}_\infty \leq \frac{2}{\sqrt{\pi}} \sqrt{T} \norm{ \dot{\w s}- \dot s}_\infty  \no \\
&\leq C \norm{s-\w s}_{W^{1,\infty}}. \label{I4}
\end{align}
Next, we estimate $I_5$.
\begin{align*}
|I_5|&= \b |\int^b_0 [N(s(t),\xi;t,0)-N(\w s(t),\xi;t,0)] u_0(\xi) d\xi \b| \no \\
&=\b | \int^b_0 \frac{1}{2\sqrt{\pi t}}\b[ \exp\bigg (\frac{-(s(t)-\xi)^2}{4t} \bigg) + \exp\bigg(\frac{-(s(t)+\xi)^2}{4t} \b)\\
&- \exp\bigg (\frac{-(\w s(t)-\xi)^2}{4t} \bigg) \no 
-\exp\bigg(\frac{-(\w s(t)+\xi)^2}{4t} \b)\b]u_0(\xi) d\xi \b| \no \\
&\leq \int^b_0 \frac{1}{2\sqrt{\pi t}} \abs{\exp\bigg (\frac{-(s(t)-\xi)^2}{4t} \bigg)-\exp\bigg (\frac{-(\w s(t)-\xi)^2}{4t} \bigg)} \abs{u_0(\xi)} d\xi \\
&+\int^b_0 \frac{1}{2\sqrt{\pi t}} \abs{\exp\bigg(\frac{-(s(t)+\xi)^2}{4t} \b)- \exp\bigg(\frac{-(\w s(t)+\xi)^2}{4t} \bigg) }  \abs{u_0(\xi)} d\xi. 
\end{align*}
Since $s(t)>0$, applying again the inequality (\ref{estimationexp}) to the previous estimate of $I_5$, leads to 
\begin{align*}
\abs{I_5} &\leq \frac{1}{8\sqrt{\pi}} \int^b_0 \big( \abs{(s(t)+\xi)^2-(\w s(t)+\xi)^2} +\abs{(s(t)-\xi)^2-(\w s(t)-\xi)^2}\big)
\dfrac{e^{\frac{-\xi^2}{t}}}{t^{3/2}} d\xi \abs{u_0(\xi)} d\xi \\
 &\leq \frac{1}{8\sqrt{\pi}} \int^b_0 \abs{s(t)-\w s(t)} \big(\abs{s(t)+\w s(t)+2\xi} +\abs{s(t)+\w s(t)-2\xi}\dfrac{e^{\frac{-\xi^2}{t}}}{t^{3/2}} d\xi \norm{u_0}_\infty \\
 &\leq \frac{1}{2\sqrt{\pi}} \int^b_0 \dfrac{e^{\frac{-\xi^2}{t}}}{\sqrt{t}} d\xi \dfrac{\abs{s(t)-\w s(t)}}{t} \max{(\norm{s}_\infty, \norm{\w s}_\infty}) \norm{u_0}_\infty \\
&\leq \dfrac{1}{4} \dfrac{\abs{s(t)-\w s(t)}}{t} \max{(\norm{s}_\infty, \norm{\w s}_\infty}) \norm{u_0}_\infty.
\end{align*}
Since $s(0)=\w s(0)=b$, we have 
\begin{equation*}
\dfrac{\abs{s(t)-\w s(t)}}{t} \leq 2\norm{s-\w s}_{C^{0,1}([0,T])} \leq C \norm{s-\w s}_{W^{1,\infty}}.
\end{equation*}
Therefore 
\begin{equation}\label{I5}
\abs{I_5} \leq C \norm{s-\w s}_{W^{1,\infty}}.
\end{equation}
By combining (\ref{I2}), (\ref{I3}), (\ref{I4}) and (\ref{I5}), we obtain 
\begin{equation}\label{sum}
\sum_{i=2}^5 I_i \leq C \norm{s-\w s}_{W^{1,\infty}}.
\end{equation}
We deduce from (\ref{sumofI}) and (\ref{sum}) that 
\begin{equation}
\abs{I_1} \leq C \norm{s-\w s}_{W^{1,\infty}}.
\end{equation}

By the maximum principle for the heat equation \cite{friedman2008partial}, we have 
\begin{equation}
\abs{I_1(x)}=\abs{\int_0^b N(x,\xi;t,0)[u_0(\xi)- \w u_0(\xi)]d\xi} \leq C \norm{s-\w s}_{W^{1,\infty}}, \quad x\geq  \w s(t),  \quad 0\leq t\leq T,
\end{equation}
which ends the proof of the lemma.
\end{proof}

Now, we fix $c\in \mathbb R$, and let $\Pi_c^+ $ be the  right half-plane
\begin{equation}
{\Pi_c^+ = \Big\{ z= s+i \tau: \tau \in \mathbb R, \, s>c  \Big\}.}
\end{equation}
 We consider the Hardy space $H^2 (\Pi_c^+)$ {\cite{rudin2006real}}, defined as the space of holomorphic functions $h$ in $\Pi_c^+$ for which
\[
\sup_{s>c} \int_{-\infty}^{+\infty} |h(s+it)|^2 dt <\infty.
\]
Let 
\[
 L^2_c(\mathbb R_+)= \Big\{ e^{-ct}f(t) \in L^{2}(\mathbb R_+)  \Big\}.
 \]
We define the Laplace transform of a function $f$ by 
\begin{equation}
\m L f(z)= \int_0^{+\infty} e^{-z \tau }f(\tau)d\tau.
\end{equation} 
 
\begin{lemma}\label{inversionlemma}
The Laplace transform $\mathcal L: \; L^2_c(\mathbb R_+) \rightarrow H^2 (\Pi_c^+)$ is an invertible bounded operator. 
 In addition for $s\geq c $, we have 
 \bean \label{identity}
\frac{1}{2\pi} \int_{-\infty}^{+\infty} |\mathcal L f(s+it)|^2 dt  = \int_0^{+\infty} e^{-2s\tau}|f(\tau)|^2 d\tau.
 \eean
\end{lemma}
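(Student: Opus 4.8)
The statement is the Paley--Wiener theorem for the Hardy space of a half--plane, so the plan is to transfer everything to the classical Plancherel theorem for the Fourier transform on $\mathbb{R}$ by a translation in the spectral variable. First I would reduce to $c=0$: given $f\in L^2_c(\mathbb{R}_+)$, the function $g(\tau)=e^{-c\tau}f(\tau)$ belongs to $L^2(\mathbb{R}_+)$, and writing $z=c+w$ one has $\mathcal{L}f(z)=\mathcal{L}g(w)$, while the affine map $w\mapsto w+c$ is a biholomorphism of $\Pi_0^+$ onto $\Pi_c^+$ that carries the defining supremum of $H^2(\Pi_c^+)$ to that of $H^2(\Pi_0^+)$ and the weight $e^{-2s\tau}$ attached to $f$ to the weight $e^{-2(s-c)\tau}$ attached to $g$. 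Hence it suffices to establish boundedness, the norm identity, and invertibility for $c=0$.

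For $c=0$ and $g\in L^2(\mathbb{R}_+)$, fix $s>0$. Since $e^{-s\cdot}\in L^2(\mathbb{R}_+)$, the function $\tau\mapsto e^{-s\tau}g(\tau)$, extended by zero to $\tau<0$, lies in $L^1(\mathbb{R})\cap L^2(\mathbb{R})$ by Cauchy--Schwarz, and
\begin{equation*}
\mathcal{L}g(s+it)=\int_0^{+\infty}e^{-it\tau}\big(e^{-s\tau}g(\tau)\big)\,d\tau
\end{equation*}
is exactly its Fourier transform evaluated at $t$. Holomorphy of $z\mapsto\mathcal{L}g(z)$ on $\Pi_0^+$ follows from differentiation under the integral sign, the integral converging locally uniformly there. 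Plancherel's theorem then gives
\begin{equation*}
\int_{-\infty}^{+\infty}|\mathcal{L}g(s+it)|^2\,dt=2\pi\int_0^{+\infty}e^{-2s\tau}|g(\tau)|^2\,d\tau,
\end{equation*}
which is identity \eqref{identity} once the translation is undone. Letting $s\downarrow 0$ and using monotone convergence, the right--hand side increases to $2\pi\|g\|_{L^2}^2$, so $\sup_{s>0}\int|\mathcal{L}g(s+it)|^2\,dt=2\pi\|g\|_{L^2}^2<\infty$; thus $\mathcal{L}g\in H^2(\Pi_0^+)$ and $\mathcal{L}$ is bounded, in fact $(2\pi)^{-1/2}\mathcal{L}$ is an isometry onto its image, which is therefore closed.

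It remains to prove surjectivity, and I expect the independence--of--$s$ step to be the main obstacle. Given $h\in H^2(\Pi_0^+)$, for each $s>0$ put $g_s(\tau)=\frac{1}{2\pi}\int_{-\infty}^{+\infty}h(s+it)e^{it\tau}\,dt$, the $L^2$ inverse Fourier transform of the slice $t\mapsto h(s+it)$, so that formally $e^{s\tau}g_s(\tau)=\frac{1}{2\pi i}\int_{\mathrm{Re}\,z=s}h(z)e^{z\tau}\,dz$ (a minor regularization, e.g. inserting a Gaussian factor, handles the fact that this vertical integral is only conditionally convergent). Applying Cauchy's theorem to $h(z)e^{z\tau}$ over the rectangle $[s_1,s_2]\times[-R,R]\subset\Pi_0^+$ and using $\int_{-\infty}^{+\infty}\big(\int_{s_1}^{s_2}|h(\sigma+it)|^2\,d\sigma\big)\,dt<\infty$ (from the uniform $L^2$ bound and Fubini) to send the two horizontal sides to zero along a suitable sequence $R_n\to\infty$, one obtains $e^{s_1\tau}g_{s_1}(\tau)=e^{s_2\tau}g_{s_2}(\tau)$, so $g(\tau):=e^{s\tau}g_s(\tau)$ is independent of $s$. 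Then $\|g_s\|_{L^2(\mathbb{R})}^2=\int_{\mathbb{R}}e^{-2s\tau}|g(\tau)|^2\,d\tau=\frac{1}{2\pi}\|h(s+i\cdot)\|_{L^2}^2\le\frac{1}{2\pi}\|h\|_{H^2}^2$ for every $s>0$; since $e^{-2s\tau}\to\infty$ on $\{\tau<0\}$ as $s\to\infty$, monotone convergence forces $g=0$ a.e.\ on $\{\tau<0\}$, whence $g\in L^2(\mathbb{R}_+)$ with $\|g\|_{L^2}^2\le\frac{1}{2\pi}\|h\|_{H^2}^2$. Finally, since $e^{-s\cdot}g\in L^1\cap L^2$, Fourier inversion gives $\mathcal{L}g(s+it)=\widehat{e^{-s\cdot}g}(t)=\widehat{g_s}(t)=h(s+it)$ for all $s>0$, $t\in\mathbb{R}$, i.e.\ $\mathcal{L}g=h$. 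Translating back by $c$ completes the proof; alternatively, the whole statement is the classical Paley--Wiener theorem and could simply be cited.
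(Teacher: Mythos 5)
Your proposal is correct and follows essentially the same route as the paper: you identify $t\mapsto \mathcal L f(s+it)$ with the Fourier transform of $e^{-s\tau}f(\tau)\chi_{(0,+\infty)}(\tau)$ and invoke Plancherel/Parseval to obtain the stated identity and the uniform $H^2$ bound, which is exactly the paper's argument. The only differences are cosmetic: you first reduce to $c=0$ by translation and you sketch the standard Cauchy-theorem proof of surjectivity, whereas the paper works at general $c$ and simply cites the Paley--Wiener theorem (Rudin) for surjectivity; note only that at the boundary line $s=c$ the identity should be read via the $L^2$ Fourier--Plancherel extension (as the paper remarks), since your argument fixes $s>c$ where the integrand is also in $L^1$.
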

\begin{proof}
For $f \in  L^2_c(\mathbb R_+)$, 
the Laplace transform 
\bean \label{Laplace}
\mathcal L f(z)= \int_0^{+\infty} e^{-z \tau} f(\tau) d\tau,
\eean
has an holomorphic extension  to the right half-plane $\Pi_c^+$. Moreover one can easily check 
that 
\[
\sup_{s>c} \int_{-\infty}^{+\infty} | \mathcal L f (s+it)|^2 dt <\infty.
\]
We deduce from \eqref{Laplace} 
\bea
\mathcal L f (s+it) =  \int_0^{+\infty} e^{-i t \tau} e^{-s\tau} f(\tau) d\tau, \quad t\in \mathbb R.
\eea
We remark that $t\rightarrow \mathcal L f (s+it)$ is actually the  Fourier transform of $e^{-s\tau} f(\tau) \chi_{(0, +\infty)}(\tau)$
which lies in  $L^2(0, +\infty)$ for $s=c$ and in $L^2(0, +\infty)\cap L^1(0, +\infty)$ for $s>c$. Applying the 
classical  inverse Fourier transform, we get 
\bea
e^{-s\tau} f(\tau)  = \frac{1}{2\pi} \int_{-\infty}^{+\infty} e^{i\tau t} \mathcal L f(s+it) dt, \quad t>0.
\eea
The identity  \eqref{identity} is then a direct consequence of the Parseval identity.\\

Finally, Paley-Wiener Theorem  \cite{rudin2006real} shows that the  Laplace transform is surjective from $L^2_c(\mathbb R_+)$  onto  $H^2 (\Pi_c^+)$.
  \end{proof}  
  
Now, we give the proof of the main Theorem \ref{main theorem u0}. Further $C>0$ denotes
a generic  constant that depends on $\widehat{M},b, T,\hat{H}, u_0$.
\begin{proof}
We first  estimate $I_1$. We have 
\begin{align*}
I_1(x)&= \int_0^b N(x,\xi;t,0)(u_0(\xi)- \w u_0(\xi))d\xi \\
&= \dfrac{1}{2\sqrt{\pi t}} \int_0^b \big[ \exp\big (\frac{-(x-\xi)^2}{4t} \big) +\exp\big (\frac{-(x+\xi)^2}{4t} \big) \big] (u_0(\xi)- \w u_0(\xi))d\xi\\
&= \dfrac{1}{2\sqrt{\pi t}} \int_{-b}^b  \exp\big (\frac{-(x+\xi)^2}{4t} \big)  v_0(\xi) d\xi,
\end{align*}
where 
\begin{equation*}v_0(\xi)=
\left\lbrace 
\begin{array}{l}
 u_0(\xi)-\w u_0(\xi) \quad  \text{if } 0<\xi<b, \\
 u_0(-\xi)-\w u_0(-\xi) \quad \text{if } -b<\xi<0. 
 \end{array}
\right.
\end{equation*}

For fixed $t>0$, applying the changes of variables  $y=\frac{x-\w s(t)}{2t}$, and $\xi= r -b$, we  obtain
\begin{align*}
I_1( 2t y+\w s(t))&= \dfrac{1}{2\sqrt{\pi t}} \exp(-ty^2-(\w s(t)-b) y)  \int_{0}^{2b} e^{-y r} f(r) dr,
\end{align*}
with  \[f(r) =  \exp(-\frac{(\w s(t) -b+r)^2}{4t})v_0(r-b).\] 

Therefore 
 \bean \label{eee}
 \mathcal{L}(f)(y)=\displaystyle \int_0^{2b}  e^{-yr} f(r)dr = \displaystyle 2\sqrt{\pi t} \exp(ty^2+(\w s(t)-b) y) I_1(2ty+\w s(t)), \; \;  \textrm{for  }  y>0.\eean
Since $f \in L^\infty(0,+\infty)$, we have 
\begin{equation}
\norm{f}_{L^\infty(0,+\infty)} \leq \underset{0\leq r \leq 2b}{\sup}  \exp(-\frac{(\w s(t) -b+r)^2}{4t}) \abs{v_0(r-b)}
 \leq 2 \norm{u_0}_{L^\infty(0,b)}\leq 2bH.
\end{equation}
Let $\Omega$ be the right half plane, that is $\Omega=\{ z\in \C, \ Re(z)>0, \ Im(z) \in \R \}$. \\

For $z\in \Omega$, define 
\begin{equation}
F(z)=\mathcal{L}(f)(y+ip)= \int_0^{2b}  e^{-(y+ip)r} f(r)dr.
\end{equation}
We remark that the function $z\rightarrow F(z)$ is holomorphic in $\Omega$, and we have 
\begin{align*}
\abs{F(z)} \leq 2b  \norm{f}_{L^\infty(0,+\infty)} \leq 4b^2 H .
\end{align*}
Consequently
\begin{equation}
\abs{F(z)} \leq M, \quad z \in \Omega,
\end{equation}
where $M=\max(1,4b^2H)$.

\begin{proposition}
Denote $w_\pm(z)=\frac{2}{\pi} (\frac{\pi}{2}\mp arg(z))$, the harmonic measure of 
$]0,+\infty[ \times \R_\pm$. It is the unique solution to the system
\begin{align}
\Delta w_\pm(z) &= 0, \quad z\in ]0,+\infty[ \times \R_\pm, \\
w_\pm(z) &=1, \quad \Im(z)=0, \\
w_\pm(z) &=0, \quad { \Re(z)=0}.
\end{align}
\end{proposition}

The holomorphic unique continuation of the function $z\rightarrow F(z)$ using the two constants Theorem \cite{nevanlinna1970analytic}, gives 
\begin{equation}
\abs{F(z)} \leq M^{1-w_\pm(z)} \abs{F(z)}^{w_\pm(z)}_{L^\infty(\{0\} \times \R_\pm)} \leq M \ep^{w_\pm(z)}, 
\end{equation}
where $\ep=\abs{F(z)}_{L^\infty(\{0\} \times \R_+)}$. \\

We further consider two different situations. Let $B$ a strictly positive constant. If $\varepsilon \geq 1$, we obtain 
\begin{align}
\int_0^B\abs{F(1+ip)}^2 dp &\leq  \int_0^B\abs{M \ep^{w_+(1+ip)} }^2 dp \leq B M^2 \ep^{2\underset{0<p<B}{\max} w_+(1+ip) } \no \\
&\leq B M^2  \ep^{2(1-\frac{2}{\pi} arctan(0))}\leq  B M^2  \ep^2 \label{FBmax}.
\end{align}
We have by analogy
\begin{align}
\int^0_{-B}\abs{F(1+ip)}^2 dp &\leq  \int^0_{-B} \abs{M \ep^{w_-(1+ip)} }^2 dp \leq B M^2 \ep^{2\underset{-B<p<0}{\max} 
w_-(1+ip) } \no \\
&\leq B M^2  \ep^{2(1+\frac{2}{\pi} arctan(0))}  \leq B M^2  \ep^2      \label{F-Bmax}.
\end{align}
We deduce from  inequalities (\ref{FBmax}) and (\ref{F-Bmax}) that 
\begin{equation}
\int_{-B}^B \abs{F(1+ip)}^2 dp \leq 2 B M^2  \ep^2.\label{*max}
\end{equation}

Since $\norm{u_0}_{H^1(0,b)}\leq \widehat{M}$, then 
\begin{equation}
\norm{f}_{H^1(0,2b)} \leq C \widehat{M}.
\end{equation}
Recall that $f(0)=f(2b)=0$. Therefore, we have 
\begin{align}
\abs{p} \abs{F(1+ip)}&= \abs{\int_0^{2b} p e^{-ipr} f(r)e^{-r}dr } = \abs{\int_0^{2b} e^{-ipr}(f(r)e^{-r})'dr } \no \\
&\leq C \widehat{M} \label{**}.
\end{align}
Using  inequalities (\ref{*max}) and (\ref{**}) gives
\begin{align}
\int_\R \abs{F(1+ip)}^2 dp &= \int^B_{-B} \abs{F(1+ip)}^2 dp+\int_{\abs{p}>B} \abs{F(1+ip)}^2 dp \no \\
&\leq 2 B M^2  \ep^2 +C\widehat{M}\dfrac{1}{B} \leq C_1(B \ep^2 +\dfrac{1}{B}),
\end{align}
where $C_1= 2M^2+C\widehat{M}>0$.

In this case by taking $B=1$, we have 
\begin{equation}\label{eqqq1}
\int_\R \abs{F(1+ip)}^2 dp \leq 2C_1 \ep^2. 
\end{equation}
If $0<\varepsilon < 1$, we have 
\begin{align}
\int_0^B\abs{F(1+ip)}^2 dp &\leq  \int_0^B\abs{M \ep^{w_+(1+ip)} }^2 dp \leq B M^2 \ep^{2\underset{0<p<B}{\min} w_+(1+ip) } \no \\
&\leq B M^2  \ep^{2(1-\frac{2}{\pi} arctan(B))} \label{FB}.
\end{align}
We have by analogy
\begin{align}
\int^0_{-B}\abs{F(1+ip)}^2 dp &\leq  \int^0_{-B} \abs{M \ep^{w_-(1+ip)} }^2 dp \leq B M^2 \ep^{2\underset{0<p<B}{\min} 
w_-(1+ip) } \no \\
&\leq B M^2  \ep^{2(1-\frac{2}{\pi} arctan(B))}\label{F-B}.
\end{align}
We deduce from  inequalities (\ref{FB}) and (\ref{F-B}) that 
\begin{equation}
\int_{-B}^B \abs{F(1+ip)}^2 dp \leq 2 B M^2  \ep^{2(1-\frac{2}{\pi} arctan(B))}.\label{*}
\end{equation}
Using  inequalities (\ref{*}) and (\ref{**}) gives
\begin{align}
\int_\R \abs{F(1+ip)}^2 dp &= \int^B_{-B} \abs{F(1+ip)}^2 dp+\int_{\abs{p}>B} \abs{F(1+ip)}^2 dp \no \\
&\leq 2 B M^2  \ep^{2(1-\frac{2}{\pi} arctan(B))} +C\widehat{M}\frac{1}{B} \no \\
&\leq C_1 ( B \ep^{2(1-\frac{2}{\pi} arctan(B))} +\frac{1}{B}), \label{c1}
\end{align}
where $C_1= 2M^2+C\widehat{M}>0$. 

We fix $B\geq 1$. On one hand  we have $1-\dfrac{1}{3B^2}\geq \dfrac{1}{2}$, then 
\begin{equation*}
\frac{1}{B}(1-\frac{1}{3B^2})\geq \frac{1}{2B}.
\end{equation*}
On the other hand, we have 
\begin{align*}
\frac{\pi}{2}-arctan(B)&=arctan(\frac{1}{B}) \\
&\geq \frac{1}{B} -\frac{1}{3B^3} \\
&\geq \frac{1}{B} (1-\frac{1}{3B^2}).
\end{align*}
Then, for $B\geq 1$, we have 
\begin{equation}\label{c11}
\frac{\pi}{2} -arctan(B) \geq \frac{1}{2B}.
\end{equation}
Combining  inequality (\ref{c11}) with  estimate (\ref{c1}) yields 
\begin{equation}
\int_\R \abs{F(1+ip)}^2 dp \leq C_1 ( B \ep^{\frac{2}{\pi B}} +\frac{1}{B}) , B \geq 1.
\end{equation}
Now, by taking $B \ep^{\frac{2}{\pi B}}=\frac{1}{B}$, we obtain 
\begin{equation}\label{in1}
\int_\R \abs{F(1+ip)}^2 dp \leq \frac{C}{B} , \quad B \geq 1, 
\end{equation}
and 
\begin{equation}
-\frac{1}{\pi}\ln(\ep)=B \ln(B).
\end{equation}
We remark that $\dfrac{\ln(B)}{B}\leq e^{-1}$, for $B>e$. \\

Therefore we have $-\dfrac{1}{\pi}\ln(\ep)=B \ln(B)\leq B^2e^{-1}$.
In other words 
\begin{equation}\label{in2}
\frac{1}{B} \leq \sqrt{\frac{\pi}{e}} \frac{1}{\sqrt{\abs{\ln(\ep)}}}, \quad B>e.
\end{equation}
Substituting (\ref{in2}) in the estimation (\ref{in1}), we obtain 
\begin{equation} \label{eqqq2}
\int_\R \abs{F(1+ip)}^2 dp \leq \frac{C}{\sqrt{\abs{\ln(\ep)}}}.
\end{equation}
We deduce from inequalities \eqref{eqqq1} and \eqref{eqqq2} that
\begin{equation}
\int_\R \abs{F(1+ip)}^2 dp \leq \phi^2(\ep),
\end{equation}
where
\bean \label{phi}
\phi(\ep)  = \left\{ \ba{llcc}
C \ep, &\textrm{if  } \ep \geq 1,\\
\frac{C}{\abs{\ln(\ep)}^{\frac{1}{4}} } &\textrm{if  } \ep < 1.
\ea
\right.
\eean
By Fourier Plancherel in Lemma \ref{inversionlemma},  we have 
\begin{equation}
\int_0^{2b} \abs{f(r)}^2 e^{-2r}dr= \dfrac{1}{2\pi}\int_\R \abs{F(1+ip)}^2 dp  \leq 
\phi^2(\ep).
\end{equation}
Hence 
\begin{equation}
\norm{f}^2_{L^2(0,2b)} \leq \phi^2(\ep).
\end{equation}
Finally, we obtain 
\bean \label{bbb}
\norm{u_0-\w u_0}_{L^2(0,b)} \leq \phi(\|F(z)\|_{L^\infty(\{0\} \times \R_+)}).
\eean
On the other hand,  equation \eqref{eee} leads to  
\[
F(z) =  2\sqrt{\pi t}\exp(tz^2+(\w s(t)-b) z) I_1(2tz+\w s(t))
\]
for $z \in \mathbb R_+$. \\

Since $F(y)$ is continuous on $\mathbb R_+$,  and satisfies $\underset{y \rightarrow +\infty}\lim F(y) = 0$, it reaches its maximum 
at $y_0 \in \mathbb R_+$.  We next study  two different cases:\\
Case 1:  $y_0\in (0, 1)$. Then 
\bea
\|F\|_{L^\infty(\{0\} \times \R_+)} \leq 2\sqrt{\pi T}\exp( T+2\norm{\w s}_{L^\infty} )\|I_1\|_{L^\infty(\w s(t), +\infty)}\leq C \|I_1\|_{L^\infty(\w s(t), +\infty)}, 
\eea
which combined with inequality \eqref{bbb}, and  Lemma \ref{lll}, yield 
\bean \label{ffff1}
\norm{u_0-\w u_0}_{L^2(0,b)} \leq \phi(C\norm{s-\w s}_{W^{1,\infty}}).
\eean
Case 2:  $y_0\geq 1$. We have 
\bean \label{sss}
\|F\|_{L^\infty(\{0\} \times \R_+)}= \abs{F(y_0)} \leq 2\sqrt{\pi T}e^{(T+2\norm{\w s}_{L^\infty} )y_0^2}
\|I_1\|_{L^\infty(\w s(t), +\infty)}.
\eean
On the other hand, since $f(0)=f(2b)=0$, a simple integration  by parts 
gives
\bea
F(y) = \frac{1}{y}\int_0^{2b}e^{-yr}f^\prime(r) dr, 
\eea
for $y\geq 1$, which implies 
\bea
\abs{F(y)}\leq \frac{C}{y}.
\eea
Hence 
\bea
\|F\|_{L^\infty(\{0\} \times \R_+)} \leq \frac{C}{y_0},
\eea
or equivalently 
\bean \label{zzz}
y_0 \leq C\|F\|_{L^\infty(\{0\} \times \R_+)}^{-1}.
\eean
Combining  inequalities \eqref{sss} and \eqref{zzz}, we obtain
\bea
 \exp( \frac{-C}{\|F\|_{L^\infty(\{0\} \times \R_+)}^2})
\|F\|_{L^\infty(\{0\} \times \R_+)} \leq 2\sqrt{\pi T}\|I_1\|_{L^\infty(\w s(t), +\infty)}.
\eea
Simple calculations show that 
\bea
\exp( \frac{-(C+e^{-1})}{t^2}) \leq \exp( \frac{-C}{t^2}) t, \quad \textrm{  for all  }  t \in (0, 1). 
\eea
Consequently 
\bea
 \exp( \frac{-C}{\|F\|_{L^\infty(\{0\} \times \R_+)}^2}) \leq 2\sqrt{\pi T}\|I_1\|_{L^\infty(\w s(t), +\infty)},
\eea
or equivalently 
\begin{equation}
 \|F\|_{L^\infty(\{0\} \times \R_+)}  \leq \frac{C}{\left|\ln\left(\|I_1\|_{L^\infty(\w s(t), +\infty)}\right)
 \right|^{\frac{1}{2}}}.
\end{equation}
which combined with inequality \eqref{bbb}, and  Lemma \ref{lll}, provide 
\bean \label{ffff2}
\norm{u_0-\w u_0}_{L^2(0,b)} \leq 
 \phi(C\left|\ln\left(C\norm{s-\w s}_{W^{1,\infty}} \right) \right|^{-\frac{1}{2}}).
\eean

We deduce from estimates \eqref{ffff1} and \eqref{ffff2} the following bound 
\bean \label{ffff3}
\norm{u_0-\w u_0}_{L^2(0,b)} \leq 
 \phi(\psi(C\norm{s-\w s}_{W^{1,\infty}})),
\eean

where 
\bean \label{psi}
\psi(r)  = C\max(r, \frac{1}{\abs{\ln(r)}^{\frac{1}{2}} }), \quad \textrm{for  } r>0. 
\eean

By studying the variation of the 
function $r \rightarrow\phi(\psi(\left|\ln(Cr) \right|^{-\frac{1}{2}}))$ on $\mathbb R_+,$
one can easily show the existence of a constant $\varepsilon_0>0$ small enough 
that only depends  on  $\widehat{M},b, T,\hat{H}, u_0$, such that the estimate
\bean \label{www}
\norm{u_0-\w u_0}_{L^2(0,b)} \leq 
\frac{C}{\left| \ln( \left| \ln \left(\norm{s-\w s}_{W^{1,\infty}} \right) \right| )  \right|^{\frac{1}{4}} },
\eean
holds for $\norm{s-\w s}_{W^{1,\infty}}  < \varepsilon_0$,
which ends the proof of the main theorem.
\end{proof}

\section{Numerical Analysis}
\setcounter{equation}{0}

\subsection{Numerical Approximation} 
Since $u(s(t),t)=0$, we deduce from  \eqref{u}, the following integral equation
\begin{equation}\label{ust}
 \int^b_0 N(s(t),\xi;t,0)u_0(\xi) d\xi =\intt N(s(t),s(\t);t,\t) \dot s(\t) d\t -  \intt N(s(t),0;t,\t)h(\t) d\t.
\end{equation} 
We consider a uniform grid of the temporal interval $[0,T]$, with a time step $\Delta t= \frac{T}{N}$ discretization of $t$ where $t_j=j\Delta t, \  j=0,...,N$, and a uniform grid of spatial interval $[0,b]$ with a space step $\Delta \xi =\frac{b}{M}$ discretization of $\xi$ where $\xi_i= i \Delta \xi, \  i=0,...,M$. Then the equation (\ref{ust}) becomes
\begin{eqnarray*}
 \sum_{k=0}^{M-1}\int^{\xi_{k+1}}_{\xi_k} N(s(t_i),\xi;t_i,0)u_0(\xi) 
 d\xi \\= \sum_{j=0}^{N-1} \int_{t_j}^{t_{j+1}}
 N(s(t_i),s(\t);t_i,\t) \dot s(\t) d\t - \sum_{j=0}^{N-1}\int_{t_j}^{t_{j+1}} 
 N(s(t_i),0;t_i,\t)h(\t) d\t. 
\end{eqnarray*} 
For example if we use a quadrature formula on one point, we get for $i=1,...,N$ 
\begin{align}\label{sum_of_N}
\sum_{k=0}^{M-1} N(s(t_i),\xi_k;t_i,0)u_0(\xi_k) \Delta \xi = \sum_{j=0}^{N-1}  N(s(t_i),s(\t_j);t_i,\t_j) \dot s(\t_j) \Delta t -\sum_{j=0}^{N-1} N(s(t_i),0;t_i,\t_j)h(\t_j) \Delta t , 
\end{align} 
where $\t_j \in [t_j,t_{j+1}]$ for $j=1,...,N-1$ and $\xi_k \in [ \xi_k,\xi_{k+1}]$ for $k=1,...,M-1$. \\

This system can be represented by 
\begin{equation}\label{Ag}
\m A U_0=g, 
\end{equation}
where $\m A$ denotes a matrix depending on the quadrature formulas,
$U_0:=(U_{0,k})=(u_0(\xi_k))$ denotes the vector of the unknown initial condition of the inverse problem, and $g:=(g_i)$ is the vector representing the right hand side of the equation (\ref{sum_of_N}). Next, we shall use Gauss-Legendre formula for the numerical integration. \\

In this work, we will consider two types of regularization of the inverse problem. The first method is a Tikhonov iterative regularization which consists on computing  an approximate solution defined by \cite{engl1996regularization, bruckner2000tikhonov}
\begin{align}
(\m A^{tr} \m A +\lambda I)U_{0,m+1}&=\m A ^{tr} g+\lambda U_{0,m}, \label{Tikhonov_equation}\\
U_{0,0}&=0,
\end{align}
where the superscript $^{tr}$ denotes the transpose of a matrix, $I$ the identity matrix, and $\lambda > 0$ is the regularization parameter. 

The second method is a Landweber method where the approximate solution is defined by the sequence \cite{landweber1951iteration, hanke1995convergence}
\begin{align}
U_{0,m+1}&=(I-\lambda \m A^{tr} \m A)U_{0,m} +\lambda \m A^{tr} g, \label{landweber_equation} \\
U_{0,0}&=0,
\end{align}
where $\lambda\leq \dfrac{1}{\norm{\m A}^2}$ is a positive constant.

\subsection{Numerical results}
\subsection{Example 1}
In this example \cite{knabner1985control} the moving boundary is given by the nonlinear function 
\begin{equation}
s(t)=\sqrt{t+\frac{1}{4}},  \ t\in[0,1].
\end{equation}
and has the Neumann boundary condition
\begin{equation}
h(t)=  \ds\dfrac{exp(\frac{1}{4})}{2\sqrt{t+\frac{1}{4}}},
\end{equation} 
where $erf(x)$ is the error function given by 
$erf(x)= \frac{2}{\pi} \ds\int^x_0 exp(-t^2) \ dt$.
This example has the initial condition
$u_0(x)=\dfrac{exp(\frac{1}{4}) \sqrt{\pi}} {2}   \big( erf(\frac{1}{2}) - erf(x)\big), \ x\in [0,\frac{1}{2}]$.

\begin{figure}[H]
\centering
\includegraphics[width=12.5cm,height=7.5cm]{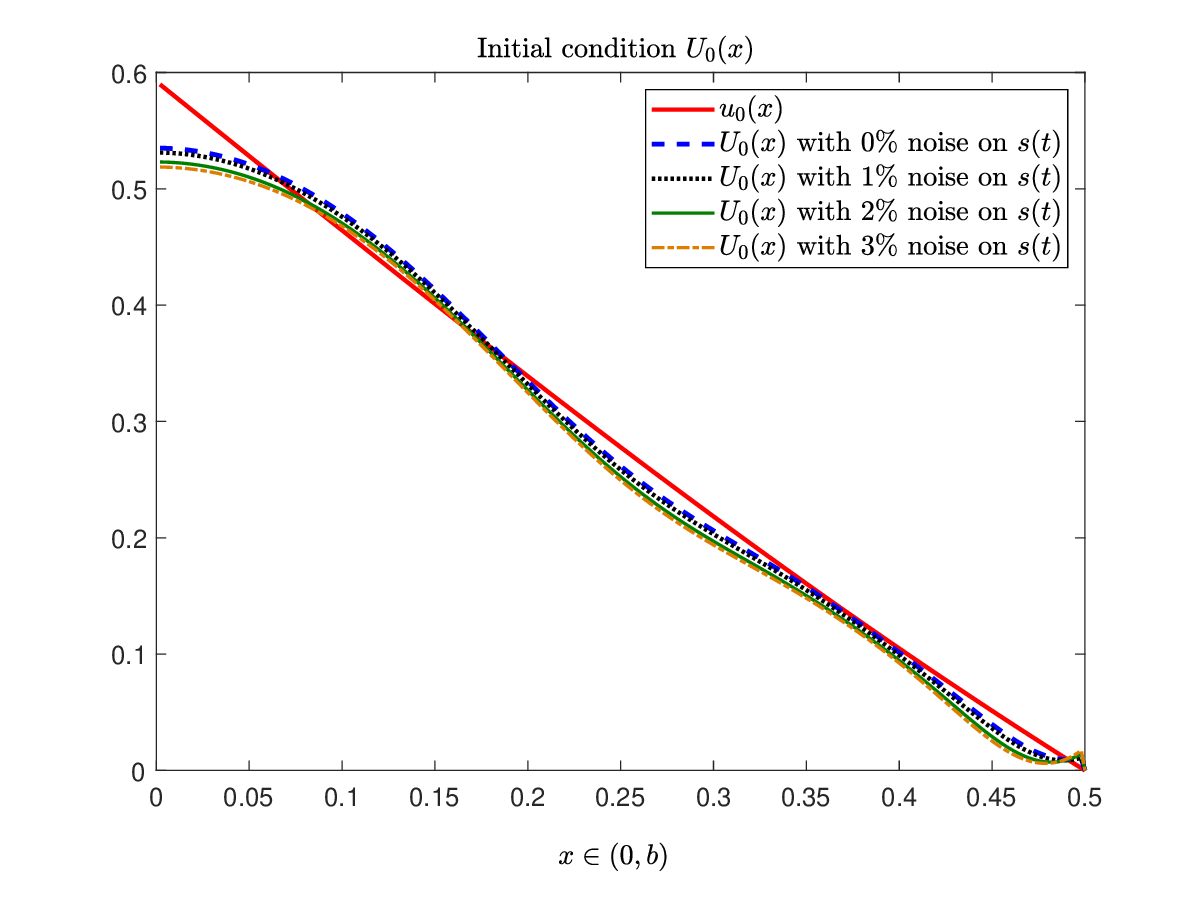}
\caption{The exact initial condition $u_0(x)$ and approximate solution with different Gaussian noise levels obtained with $\lambda=10^{-3}$, $M=250$ using Tikhonov Regularization method.}
\label{U0KnabnerTikhonov}
\end{figure}
\begin{table}[H]
\centering
\begin{tabular}{|c|c|c|}
\hline
 $\lambda$ & Noise on $s(t)$ ($\%$) &$\frac{\norm{u_0-U_0}_2}{\norm{u_0}_2}$  \\
\hline
  $10^{-3}$& 0 $\%$ & 0.0425 \\
\hline
 $10^{-3}$ & 1 $\%$ & 0.0472 \\
 \hline
 $10^{-3}$ & 2 $\%$ & 0.0571 \\
\hline
 $10^{-3}$ & 3 $\%$ & 0.0669 \\
\hline
\end{tabular}
\caption{Relative errors using Tikhonov method.}
\label{Table_errors_Knanber_Tikhonov}
\end{table}
In  Figure \ref{U0KnabnerTikhonov}, we present the exact initial condition and its reconstruction using the Tikhonov Regularization method. 
In order to test the stability of our inverse problem, we add a different level of gaussian noise to the data $s(t)$: $1$, $2$ and $3\%$. 
The Table \ref{Table_errors_Knanber_Tikhonov} compiles the relative errors of reconstruction of the function describing the initial condition using the Tikhonov Regularization method. It shows that the accuracy is altered with noise and the relative error on the numerical solution is lower than $0.07$. 

\begin{figure}[H]
\centering
\includegraphics[width=11.5cm,height=7.5cm]{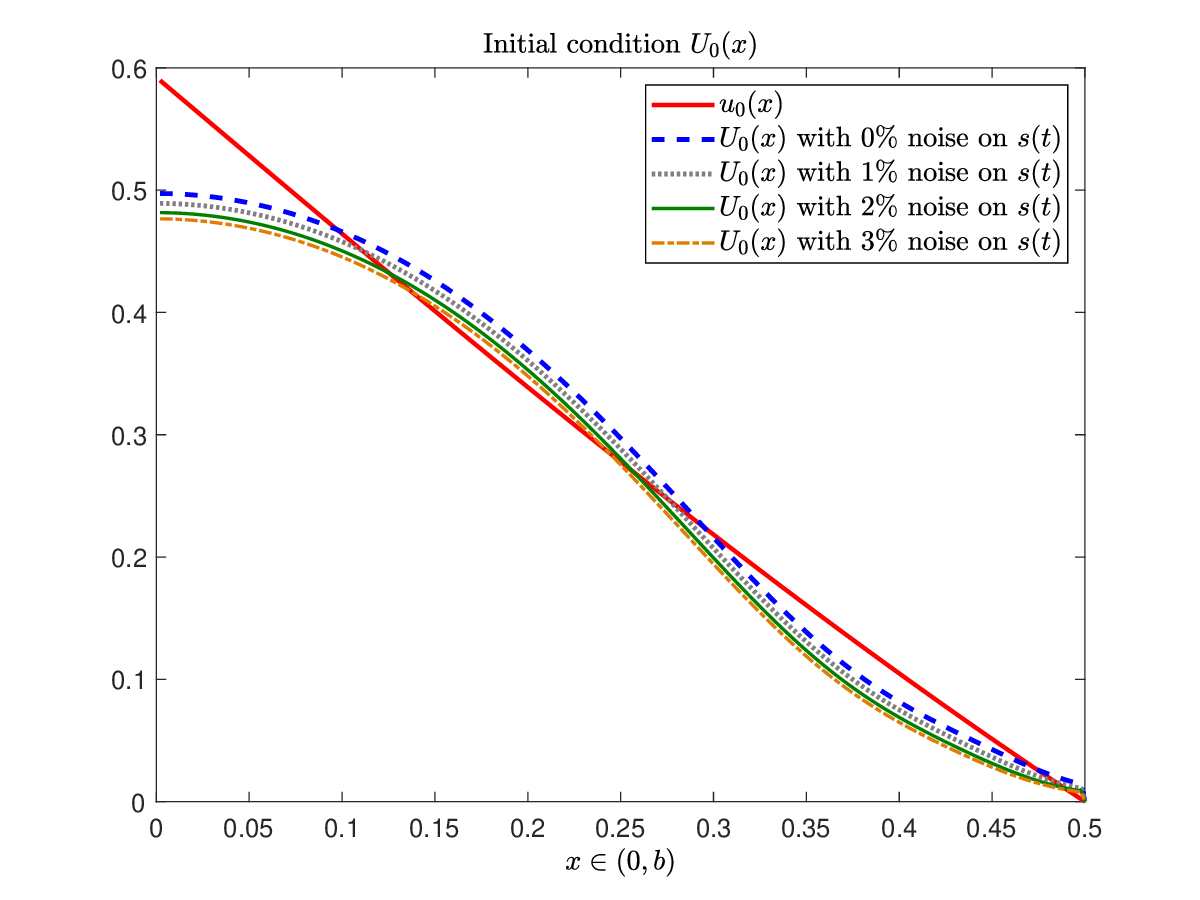}
\caption{The exact initial condition $u_0(x)$ and approximate solution with different Gaussian noise levels obtained with $M=250$ using Landweber method.}
\label{U0Knabner_Landweber}
\end{figure}
\begin{table}[H]
\centering
\begin{tabular}{|c|c|}
\hline
  Noise on $s(t)$ ($\%$) &$\frac{\norm{u_0-U_0}_2}{\norm{u_0}_2}$  \\
\hline
   0 $\%$ & 0.0846 \\
\hline
   1 $\%$ & 0.0917 \\
 \hline
 2 $\%$ & 0.1026\\
\hline
 3 $\%$ & 0.1115 \\
\hline
\end{tabular}
\caption{Relative errors using Landweber method.}
\label{Table_errors_Knanber_Landweber}
\end{table}
Considering the same example but this time with the Landweber method, the results in the Table \ref{Table_errors_Knanber_Landweber} show that the relative error of the initial condition $U_0(x)$ varies from $8.5 \times 10^{-2}$ to $1.1 \times 10^{-1}$ by adding $1\%$, $2\%$ and $3\%$ of Gaussian noise on the data $s(t)$. 
We particularly notice from Tables \ref{Table_errors_Knanber_Tikhonov} and \ref{Table_errors_Knanber_Landweber} that the accuracy of the solution $U_0(x)$ with Tikhonov Regularization method is higher compared to the solution obtained with Landweber method.

\subsection{Example 2}
This example \cite{johansson2011method} has a moving boundary given by the linear function
\begin{equation}
s(t)=\sqrt{2}-1+\frac{t}{\sqrt{2}}, \ t\in[0,1].
\end{equation}
We take the exact solution given by 
\begin{equation}
u(x,t)=-1+\exp \big( 1-\frac{1}{\sqrt{2}} +\frac{t}{2} -\frac{x}{\sqrt{2}} \big) , \ \ [x,t]\in [0,s(t)]\times [0,1].
\end{equation}
Therefore, this example has the following initial and boundary conditions
\begin{align}
b&=s(0)=\sqrt{2}-1, \\
-u_x(0,t)&=\frac{1}{\sqrt{2}} \exp\big ( 1-\frac{1}{\sqrt{2}} +\frac{t}{2} \big), \ t\in[0,1]\\
u(s(t),t)&=0, \  t\in(0,1], \\
u_x(s(t),t)&= \dot s(t)=-\frac{1}{\sqrt{2}}, \ t\in(0,1].
\end{align}
In this example we wish to recover the initial condition at $t = 0$ given by 
\begin{equation}
u_0(x)= -1+\exp \big( 1-\frac{1}{\sqrt{2}} -\frac{x}{\sqrt{2}} \big), x\in[0,b].
\end{equation}

\begin{figure}[H]
\centering
\includegraphics[width=12.5cm,height=7.5cm]{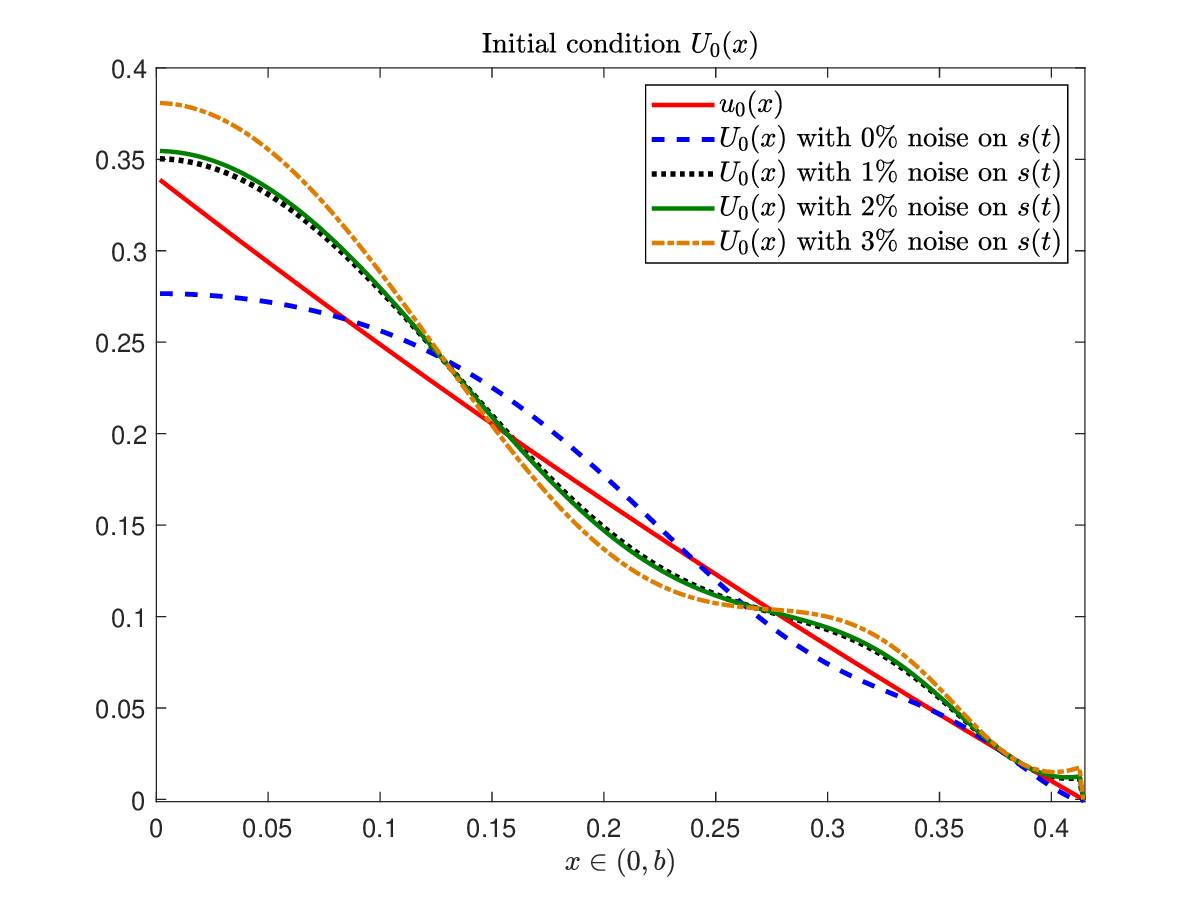}
\caption{The exact initial condition $u_0(x)$ and the approximate solution with different Gaussian noise levels obtained with $\lambda=10^{-2}$, $M=250$ using Tikhonov method.}
\label{U0_Lesnic_Tikhonov}
\end{figure}
\begin{table}[H]
\centering
\begin{tabular}{|c|c|c|}
\hline
 $\lambda$ & Noise on $s(t)$ ($\%$) &$\frac{\norm{u_0-U_0}_2}{\norm{u_0}_2}$  \\
\hline
  $10^{-2}$& 0 $\%$ & 0.0953 \\
\hline
 $10^{-2}$ & 1 $\%$ & 0.0997 \\
 \hline
 $10^{-2}$ & 2 $\%$ & 0.1082 \\
\hline
 $10^{-2}$ & 3 $\%$ & 0.1465 \\
\hline
\end{tabular}
\caption{Relative errors using Tikhonov method.}
\label{Table_errors_Lesnic_Tikhonov}
\end{table}

\begin{figure}[H]
\centering
\includegraphics[width=11.5cm,height=7.5cm]{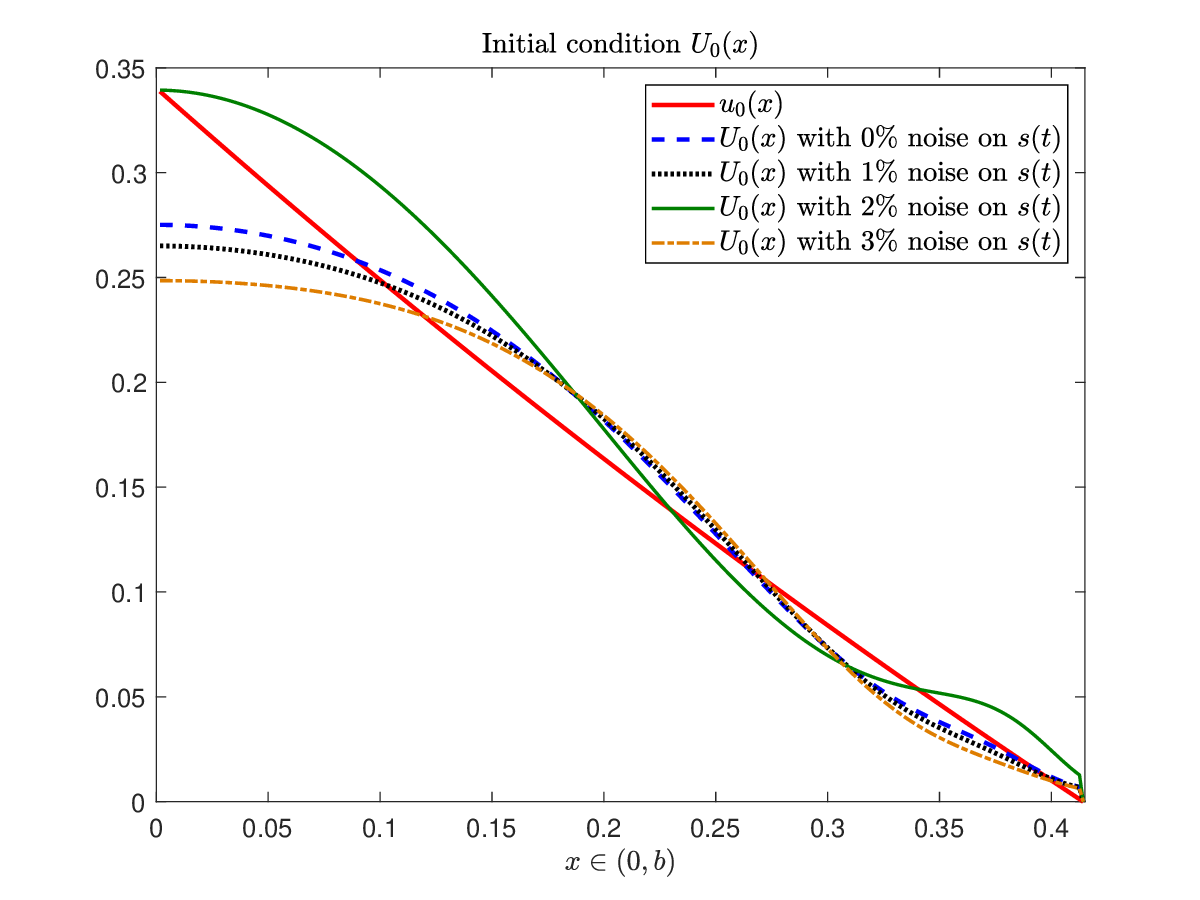}
\caption{The exact initial condition $u_0(x)$ and approximate solution with different Gaussian noise levels obtained with $M=250$ using Landweber method.}
\label{U0_Lesnic_Landweber}
\end{figure}

\begin{table}[H]
\centering
\begin{tabular}{|c|c|}
\hline
  Noise on $s(t)$ ($\%$) &$\frac{\norm{u_0-U_0}_2}{\norm{u_0}_2}$  \\
\hline
   0 $\%$ & 0.1017 \\
\hline
   1 $\%$ & 0.1188 \\
 \hline
 2 $\%$ & 0.1321 \\
\hline
 3 $\%$ & 0.1520 \\
\hline
\end{tabular}
\caption{Relative errors using Landweber method.}
\label{Table_errors_Lesnic_Landweber}
\end{table}
In Figures \ref{U0_Lesnic_Tikhonov} and \ref{U0_Lesnic_Landweber}, we plotted  the exact initial condition and the approximate solution using respectively the Tikhonov and Landweber Regularization methods  with  different added Gaussian noise levels. 
The Tables \ref{Table_errors_Lesnic_Tikhonov} and \ref{Table_errors_Lesnic_Landweber} provide the relative errors of the solution $U_0$ using respectively the Tikhonov and Landweber Regularization methods. We remark that the accuracy of the initial condition using Tikhonov Regularization is higher compared to the solution with Landweber method.

{
\subsection{Example 3}
In this numerical test, we consider an example where the analytic solution is not available. We resolve the direct Stefan problem \eqref{1.16}-\eqref{1.21} with the initial and boundary conditions cited below (\ref{conditions}) by using the Boundary Immobilisation Method (BIM)  \cite{ghanmi2020identification} in order to calculate the moving boundary $s(t)$. Then we intend to recover the initial temperature $U_0(x)$. 

\begin{equation}
u_0(x)=(3-x)\sqrt{\abs{3-2x}} , \ x\in [0,3], \quad \quad
h(t)=\sqrt{t+1}, \ t\in[0,3].
\label{conditions}
\end{equation}

\begin{figure}[H]
\centering
\includegraphics[width=11.5cm,height=7.5cm]{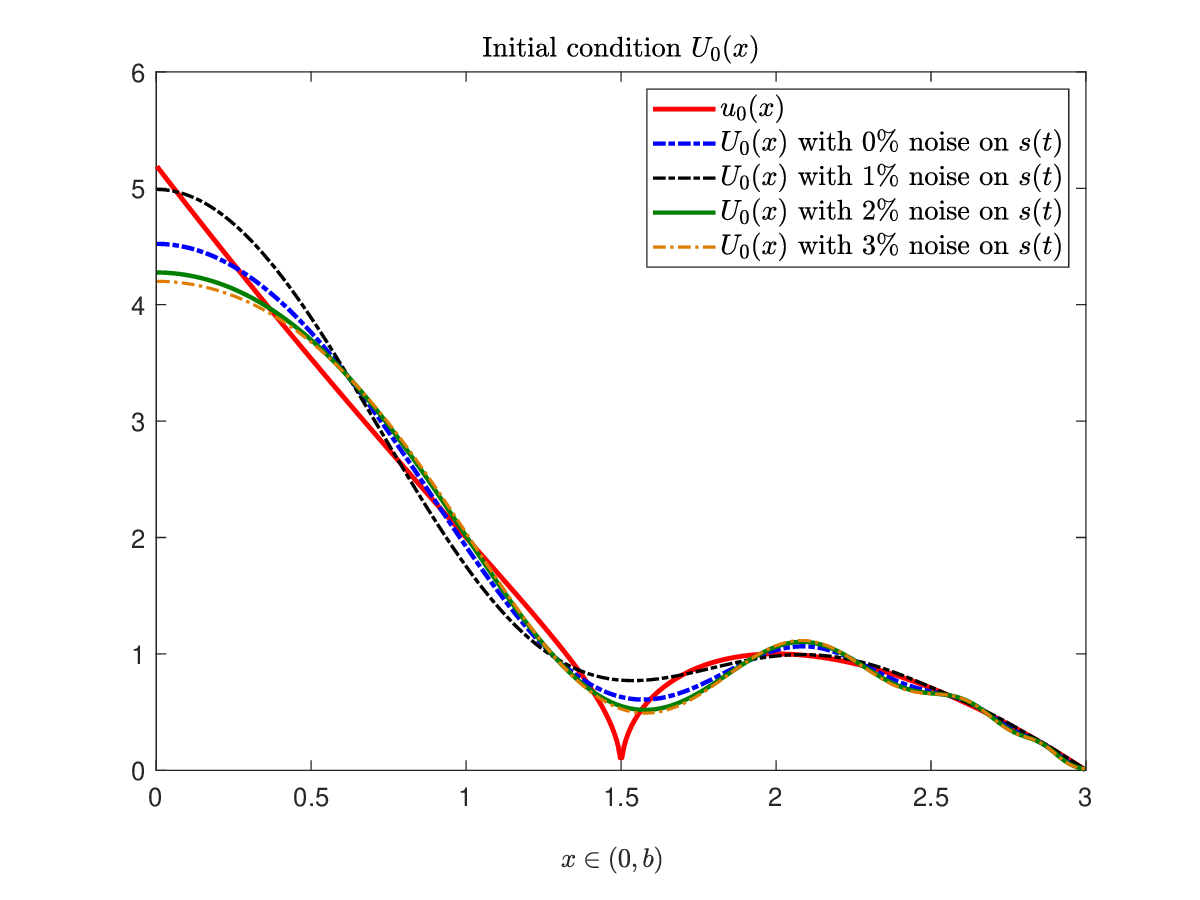}
\caption{The initial condition $u_0(x)$ and approximate solution with different Gaussian noise levels obtained with $\lambda=10^{-3}$ and $M=250$ using Tikhonov method.}
\label{U0_ex3_Tikhonov}
\end{figure}
In the Figure \ref{U0_ex3_Tikhonov}, we present the exact initial condition and the approximate solutions with different added Gaussian noise levels on the data $s(t)$ using the Tikhonov method. From the Table \ref{Table_errors_ex3_Tikhonov}, we show that the relative error of the initial condition $U_0(x)$ using the Tikhonov Regularization varies from $7.14 \times 10^{-2}$ to $1.00 \times 10^{-1}$ by adding different levels of Gaussian noise on the data $s(t)$.

\begin{table}[H]
\centering
\begin{tabular}{|c|c|c|}
\hline
 $\lambda$ & Noise on $s(t)$ ($\%$) &$\frac{\norm{u_0-U_0}_2}{\norm{u_0}_2}$  \\
\hline
  $10^{-3}$& 0 $\%$ & 0.0714\\
\hline
 $10^{-3}$ & 1 $\%$ & 0.0866 \\
 \hline
 $10^{-3}$ & 2 $\%$ & 0.0916 \\
\hline
 $10^{-3}$ & 3 $\%$ & 0.1002 \\
\hline
\end{tabular}
\caption{Relative errors using Tikhonov method.}
\label{Table_errors_ex3_Tikhonov}
\end{table}
We take the same example again but this time with the Landweber method, we notice from the table \ref{Table_errors_ex3_Landweber} that the relative error of the initial condition $U_0(x)$ varies from $6.90 \times 10^{-2}$ to $1.13 \times 10^{-1}$ by adding $1\%$, $2\%$ and $3\%$ of Gaussian noise on the data $s(t)$. The exact and approximate solutions for different noise levels using Landweber method are illustrated in Figure \ref{U0_ex3_Landweber}. 

\begin{figure}[H]
\centering
\includegraphics[width=11.5cm,height=7.5cm]{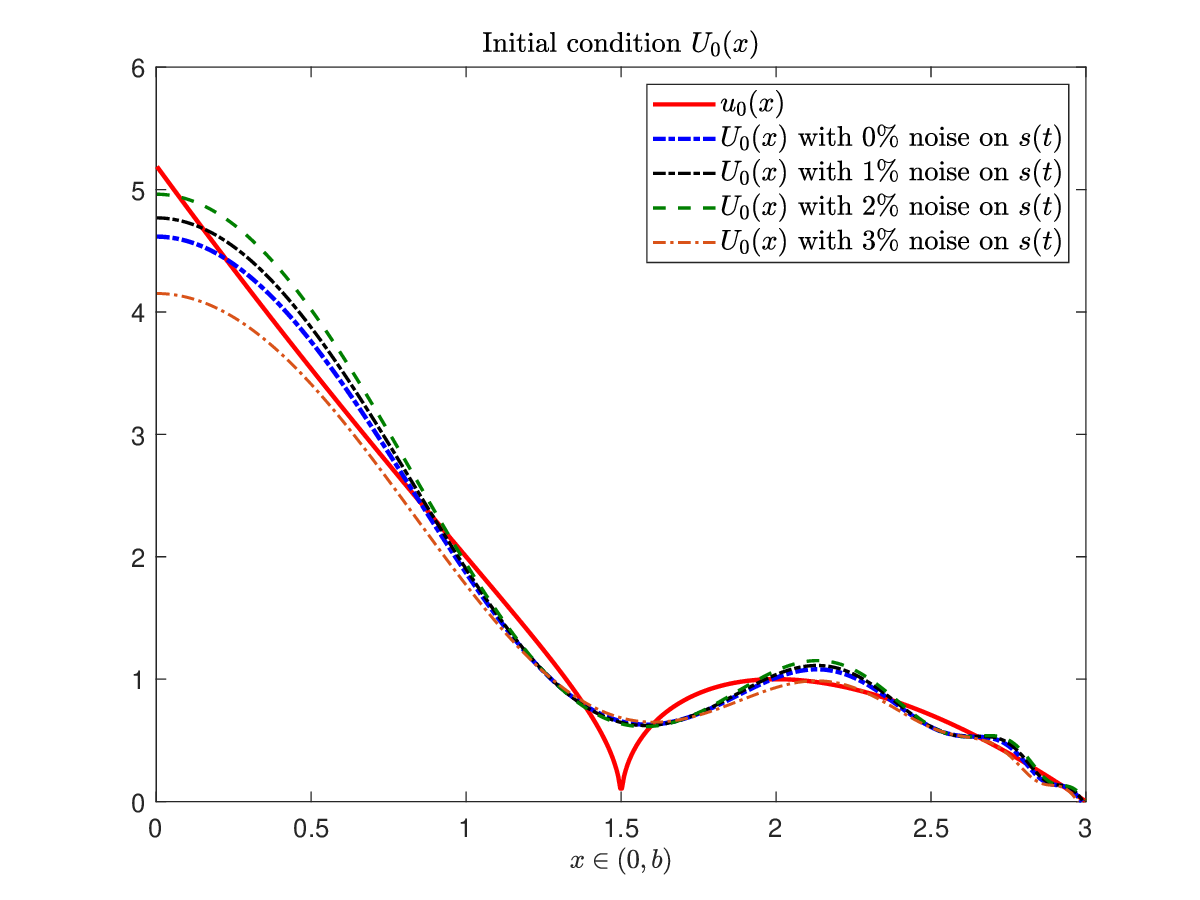}
\caption{The initial condition $u_0(x)$ and approximate solution with different Gaussian noise levels obtained with $M=250$ using Landweber method.}
\label{U0_ex3_Landweber}
\end{figure}

\begin{table}[H]
\centering
\begin{tabular}{|c|c|}
\hline
  Noise on $s(t)$ ($\%$) &$\frac{\norm{u_0-U_0}_2}{\norm{u_0}_2}$  \\
\hline
   0 $\%$ & 0.0690 \\
\hline
   1 $\%$ & 0.0755 \\
 \hline
 2 $\%$ & 0.0970 \\
\hline
 3 $\%$ & 0.1132 \\
\hline
\end{tabular}
\caption{Relative errors using Landweber method.}
\label{Table_errors_ex3_Landweber}
\end{table}
}

\begin{remark}
Notice that in the two first considered examples the explicit initial conditions  are analytic functions of the spatial 
variable. This explains their relatively good recovery far away from the endpoints, 
from the measurement of the free boundary. In Table \ref{Table_errors_ex3_Landweber}, we observe 
that the reconstruction is very sensitive to the noise level. This is in accordance with the 
obtained stability estimate, and the  ill-posedness of the inverse problem. 
 \end{remark}

\bibliographystyle{plain}
\bibliography{references}

\end{document}